\documentclass[12pt]{amsart}

\usepackage[margin=1.15in]{geometry}
\usepackage[T1]{fontenc}
\usepackage[utf8]{inputenc}
\usepackage{lmodern}
\usepackage{microtype}
\usepackage{amsmath,amssymb,amsthm,mathtools,mathrsfs}
\usepackage{enumitem}
\usepackage[dvipsnames]{xcolor}
\usepackage[colorlinks=true,linkcolor=MidnightBlue,citecolor=MidnightBlue,urlcolor=MidnightBlue]{hyperref}
\usepackage{aliascnt}
\usepackage[nameinlink,capitalize]{cleveref}
\usepackage{verbatim}

\allowdisplaybreaks[3]
\usepackage[nodisplayskipstretch]{setspace}
\newtheorem{theorem}{Theorem}[section]
\newaliascnt{proposition}{theorem}
\newtheorem{proposition}[proposition]{Proposition}
\aliascntresetthe{proposition}
\newaliascnt{lemma}{theorem}
\newtheorem{lemma}[lemma]{Lemma}
\aliascntresetthe{lemma}
\newaliascnt{corollary}{theorem}
\newtheorem{corollary}[corollary]{Corollary}
\aliascntresetthe{corollary}
\newaliascnt{definition}{theorem}
\newtheorem{definition}[definition]{Definition}
\aliascntresetthe{definition}
\newaliascnt{remark}{theorem}
\newtheorem{remark}[remark]{Remark}
\aliascntresetthe{remark}
\newaliascnt{convention}{theorem}
\newtheorem{convention}[convention]{Convention}
\aliascntresetthe{convention}
\newaliascnt{example}{theorem}
\newtheorem{example}[example]{Example}
\aliascntresetthe{example}

\crefname{theorem}{Theorem}{Theorems}
\crefname{proposition}{Proposition}{Propositions}
\crefname{lemma}{Lemma}{Lemmas}
\crefname{corollary}{Corollary}{Corollaries}
\crefname{definition}{Definition}{Definitions}
\crefname{remark}{Remark}{Remarks}
\crefname{convention}{Convention}{Conventions}
\crefname{example}{Example}{Examples}

\DeclareMathOperator{\Mar}{Mar}
\DeclareMathOperator{\codim}{codim}

\newcommand{\Mbar}{\overline{\mathcal M}}
\newcommand{\be}{\mathbf e}
\newcommand{\bx}{\mathbf x}
\newcommand{\by}{\mathbf y}
\newcommand{\bz}{\mathbf z}

\newcommand{\bt}{\mathbf t}
\newcommand{\bI}{\mathbf I}

\newcommand{\WK}{\mathrm{WK}}

\title[Virasoro Recursions for Simplicially Stable Curves]{Virasoro Recursions for Simplicially Stable Curves with Colliding Markings}
\author{You-Cheng Chou}
\address{June E Huh Center for Mathematical Challenges\\ Korea Institute for Advanced Study\\ 85 Hoegi-ro, Dongdaemun-gu\\ Seoul 02455\\ Republic of Korea}
\email{bensonchou@kias.re.kr}

\author{Hsian-Hua Tseng}
\address{Department of Mathematics\\ The Ohio State University\\ 100 Math Tower, 231 West 18th Avenue\\ Columbus, OH 43210\\ USA}
\email{hhtseng@math.ohio-state.edu}

\date{\today}

\begin{document}


\begin{abstract}
We study descendant integrals on moduli stacks of simplicially stable curves.  For each finite collision complex, we factor the reduction morphism into elementary wall crossings and obtain a reconstruction formula in terms of ordinary Witten--Kontsevich correlators.  
We use the formula to derive corrected Virasoro recursions. We then package all finite-support theories in a completed disjoint-support Fock space, where the extended potential is a cumulant translate of the Witten--Kontsevich potential.  
\end{abstract}

\maketitle
\tableofcontents

\section{Introduction}

The Witten--Kontsevich theorem identifies the descendant potential of the Deligne--Mumford spaces \(\Mbar_{g,n}\) with the distinguished solution of the KdV hierarchy \cite{Witten,Kontsevich}.  Chou and Lee \cite{CL} extended the Virasoro formalism of the Witten-Kontsevich theorem, due to \cite{KS}, to Hassett spaces of weighted pointed curves \cite{Hassett}. The main feature of Hassett spaces is that marked points are allowed to collide according to weights. Blankers and Bozlee subsequently showed that the essential datum governing collisions of marked points is not a weight vector, but a simplicial complex \cite{BB}.  Their spaces \(\Mbar_{g,\mathcal{K}}\) include all Hassett spaces and many modular compactifications that cannot arise from weighted stability.

Thus it is natural to consider integrals of $\psi$ classes on these more general moduli spaces $\Mbar_{g,\mathcal{K}}$.

This note has three purposes.  First, it proves the reconstruction formula for \(\psi\)-integrals on \(\Mbar_{g,\mathcal{K}}\) by an elementary-wall-crossing induction. This is Theorem \ref{thm:reconstruction}. Second, it transfers the ordinary Virasoro recursion to a fixed finite complex \(\mathcal{K}\), including the exceptional terms that are invisible if one writes only the generic recursion. The results are Proposition \ref{prop:ordinary-corrected}, Theorems \ref{thm:heavy-corrected} and \ref{thm:main-corrected}.  Third, it gives a consistent global formalism.  An arbitrary label set, interpreted coefficientwise through finite supports, allows arbitrarily many light markings, but ordinary partial derivatives are incompatible with the rule that overlapping label supports multiply to $0$.  We therefore work in a completed disjoint-union algebra and use ordinary derivatives only in repeatable heavy variables. See Section \ref{sec:fock} for more details.


\subsection*{Acknowledgment}
The idea of this paper took shape when the authors learned about \cite{BB}.  As part of an experiment concerning the use of generative AI in mathematical research, an earlier draft was produced through interactions with GPT-5.5 Pro.  The present revision corrects and reorganizes that draft.

\section{Simplicially stable curves and correlators}\label{sec:curves}
This section collects some definitions, constructions, and basic properties of moduli stacks of simplicially stable curves.

\subsection{Collision complexes and \(\mathcal{K}\)-stability}

Let \(S\) be a nonempty finite set.  If \((C;(p_i)_{i\in S})\) is an \(S\)-pointed curve and \(x\in C\) is a closed point, define
\[
  \Mar(x):=\{i\in S:p_i=x\}.
\]
For a subcurve \(Z\subset C\), similarly define \(\Mar(Z):=\{i\in S:p_i\in Z\}\).

A simplicial complex \(\mathcal{K}\) on \(S\) is a downward-closed collection of subsets of \(S\) containing every singleton.  A partition of \(S\) is a \(\mathcal{K}\)-partition if each block belongs to \(\mathcal{K}\).  When \(g=0\), we assume that \(\mathcal{K}\) is \emph{at least triparted}, meaning that every \(\mathcal{K}\)-partition has at least three blocks.

\begin{convention} \label{conv:stable-complex}
We call a simplicial complex $\mathcal{K}$ \emph{stable in genus} $g$ if $g>0$, or if $g=0$ and $\mathcal{K}$ is at least triparted.
\end{convention} 

\begin{definition}[{\cite[Definition 2.12]{BB}}]\label{def:Kstable}
An \(S\)-pointed curve \((C;(p_i)_{i\in S})\) of genus \(g\) is \emph{\(\mathcal{K}\)-stable} if:
\begin{enumerate}[label=(K\arabic*)]
\item \(C\) has at worst nodal singularities;
\item every marking is a smooth point;
\item \(\Mar(x)\in \mathcal{K}\) for each smooth point \(x\in C\);
\item \(\Mar(Z)\notin \mathcal{K}\) for every rational tail \(Z\subset C\);
\item the normalization of every rational irreducible component has at least three distinct special points.
\end{enumerate}
\end{definition}

Blankers and Bozlee prove the following results about the moduli \(\Mbar_{g,\mathcal{K}}\) of genus $g$ $\mathcal{K}$-stable curves.
\begin{enumerate}
    \item (\cite[Theorem 4.18]{BB}) $\Mbar_{g,\mathcal{K}}$ is a smooth proper Deligne--Mumford stack containing $\mathcal{M}_{g,S}$.
    The collision complex of $\Mbar_{g,\mathcal{K}}$, in the sense of \cite[Definition~2.6]{BB}, is exactly \(\mathcal{K}\).
    
    \item (\cite[Theorem 4.19]{BB}) Every nodal modular compactification of $\mathcal{M}_{g,S}$ with smooth colliding markings is obtained in this way.  
    
    \item (\cite[Theorem 6.3]{BB}) There is a reduction morphism
\[
  \rho_{\mathcal{K}}:\Mbar_{g,S}\longrightarrow \Mbar_{g,\mathcal{K}}
\]
contracting precisely the rational tails whose marking sets lie in \(\mathcal{K}\).
\end{enumerate}

\begin{remark}
The {\em collision complex} associated to a Hassett vector \(A=(a_i)_{i\in S}\)  is
\[
  \mathcal{K}_A=\left\{I\subseteq S:\sum_{i\in I}a_i\le 1\right\}.
\]
By \cite[Proposition 4.20]{BB}, $\Mbar_{g, \mathcal{K}_A}$ is isomorphic to the Hassett space $\Mbar_{g,A}$. Complexes of the form $\mathcal{K}_A$ are exactly the threshold complexes \cite[Corollary 4.22]{BB}.
\end{remark}

\begin{definition}[Heavy vertices]
If \(H\) is a finite set disjoint from the vertex set of a complex \(\mathcal{K}'\), let
\[
  \mathsf D_H:=\{\emptyset\}\cup\{\{h\}:h\in H\}
\]
be the discrete complex on \(H\), and let $$\mathsf D_H\sqcup\mathcal{K}'$$ 
denote the complex on \( H \sqcup \operatorname{Vert}(\mathcal{K}')\) whose nonempty faces are the singletons \(\{h\}\) and the nonempty faces of \(\mathcal{K}'\). Thus the vertices in H are isolated: the corresponding markings cannot collide with one another or with the markings indexed by vertices of $\mathcal{K}'$. We call the vertices in H \emph{heavy}.
\end{definition}

\subsection{The elementary reduction geometry}

The wall-crossing argument below uses a reduction between two simplicial compactifications.  Since Blankers--Bozlee state their universal reduction from the ordinary stable curve moduli, we record explicitly the elementary factorization and the boundary geometry needed below.

\begin{proposition}[Elementary reduction geometry]\label{prop:elementary-geometry}
Let $S$ be a nonempty finite set, let $H$ be a finite set of heavy labels, and let
\[
  \mathcal{K}^-\subset \mathcal{K}^+
\]
be simplicial complexes on \(S\) such that
\[
  \mathcal{K}^+=\mathcal{K}^-\cup\{I\},
\]
where \(I\in \mathcal{K}^+\) is a maximal face with \(|I|\ge2\).  Assume that the augmented source and target types are stable in genus $g$ in the sense of Convention~\ref{conv:stable-complex}. Then:
\begin{enumerate}[label=(\roman*)]
\item there is a canonical proper birational morphism
\[
 \pi_I:\Mbar_{g,\mathsf D_H\sqcup\mathcal{K}^-}
       \longrightarrow
       \Mbar_{g,\mathsf D_H\sqcup\mathcal{K}^+}
\]
which contracts exactly the rational tails $Z$ with $\Mar(Z)=I$, and the Blankers--Bozlee reductions satisfy
\[
 \rho_{\mathsf D_H\sqcup\mathcal{K}^+}
 =\pi_I\circ\rho_{\mathsf D_H\sqcup\mathcal{K}^-};
\]
\item let $\Delta_I$ denote the exceptional divisor parametrizing curves with an I-marked rational tail. Then
\[
\Delta_I\cong
 \Mbar_{0,\mathsf D_{\{\bullet_t\}}\sqcup(\mathcal{K}^-|_I)}
 \times
 \Mbar_{g,\mathsf D_{H\cup\{\bullet_m\}}\sqcup(\mathcal{K}^-|_{S\setminus I})};
\]
under this identification, $\pi_I|_{\Delta_I}$ is the projection to the second factor followed by the morphism replacing the heavy marking $\bullet_m$ with the collision of the markings in $I$;
\item if \(L_a^\pm\) denotes the cotangent line at a labeled section, then
\[
 L_i^-\cong\pi_I^*L_i^+\otimes\mathcal O(\Delta_I),
 \quad i\in I;
 \qquad
 L_j^-\cong\pi_I^*L_j^+,
 \quad j\in(S\setminus I)\sqcup H,
\]
and
\[
 \mathcal O_{\Delta_I}(\Delta_I)
 \cong
 (L_{\bullet_t}^{\rm tail})^\vee
 \boxtimes
 (L_{\bullet_m}^{\rm main})^\vee.
\]
\end{enumerate}
\end{proposition}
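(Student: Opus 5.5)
Given a $\mathcal K^-$-stable family $(C\to T;p_i)$, I would build $\pi_I$ as a contraction of rational tails. The plan is to construct it directly on families, check it agrees with the Blankers--Bozlee reduction by uniqueness, and then read (ii) and (iii) off the local structure near $\Delta_I$. Write $\mathcal K_\pm:=\mathsf D_H\sqcup\mathcal K^\pm$.

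\textbf{Step 1: the only offending tails.} A curve that is $\mathcal K_-$-stable fails to be $\mathcal K_+$-stable only through condition (K4). The failure occurs exactly at rational tails $Z$ with $\Mar(Z)\in\mathcal K_+\setminus\mathcal K_-=\{I\}$. Conditions (K1)--(K3) and (K5) only get weaker when $\mathcal K$ is enlarged.

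\textbf{Step 2: such tails are disjoint and minimal.} Such a tail cannot contain a further rational subtail carrying markings in $\mathcal K_-$. Indeed, every proper subset of $I$ lies in $\mathcal K^-$, so any proper subtail $Z'\subsetneq Z$ would have $\Mar(Z')\in\mathcal K^-$, which is forbidden. Since $I$ is maximal, two such tails are either equal or disjoint.

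\textbf{Step 3: constructing $\pi_I$.} Contract each such tail to a smooth point carrying the markings $I$. Fiberwise, this is the standard contraction $\omega_{C/T}(\sum p_i)$-type: take $\operatorname{Proj}$ of $\bigoplus_n\pi_*\big(\omega_{C/T}(\sum_{j\notin I}p_j+\sum_{i\in I}\epsilon p_i)\big)^{\otimes n}$, with the twist chosen so that the line bundle is trivial on $I$-tails and ample elsewhere. This is the Hassett-style argument, and I expect it to be the main technical point. Blankers--Bozlee's construction of $\rho_{\mathcal K}$ should localize: they contract exactly the tails with marking set in $\mathcal K$, tail by tail. So the cleanest route is to define $\pi_I$ via their contraction functor applied to $\mathcal K_-$-stable families, and to check base change using Step 2.

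\textbf{Step 4: factorization and properness.} The identity $\rho_{\mathcal K_+}=\pi_I\circ\rho_{\mathcal K_-}$ holds on the dense open $\mathcal M_{g,S\sqcup H}$, where all three maps are the identity. Both sides are maps from a reduced separated source to a separated target, so the identity extends everywhere. Birationality follows from the same density. Properness follows because the source is proper and the target is separated.

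\textbf{Step 5: the exceptional divisor.} For (ii), a point of $\Delta_I$ is a nodal union of two curves. One is a genus-$0$ tail carrying $I$ together with the node branch $\bullet_t$; it is $\mathcal K^-|_I$-stable with $\bullet_t$ isolated. The other is a genus-$g$ curve carrying $S\setminus I$, $H$ and the node branch $\bullet_m$, where $\bullet_m$ cannot collide with anything. Gluing along the node gives the identification, and stability of the augmented types gives nonemptiness of both factors. The map $\pi_I$ collapses the tail, so it forgets the first factor and places all of $I$ at $\bullet_m$.

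\textbf{Step 6: cotangent lines.} For (iii), use the standard comparison from the forgetful/contraction setting. For $j\notin I$ the marking $p_j$ lies away from the contracted locus, so $L_j^-=\pi_I^*L_j^+$. For $i\in I$ the natural map $\pi_I^*L_i^+\to L_i^-$ vanishes to first order exactly along $\Delta_I$. I would verify first-order vanishing in the local model $xy=t$ near the node, with smoothing parameter $t$ a local equation of $\Delta_I$; this gives the twist by $\mathcal O(\Delta_I)$. The normal bundle formula is the usual deformation-of-node statement: the smoothing parameter of the node is a section of $T_{\bullet_t}\otimes T_{\bullet_m}$. Any orbifold and automorphism issues vanish here because the node separates the curve.
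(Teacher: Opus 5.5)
Your proposal is correct. For (ii) and (iii) it is the paper's argument: an $I$-tail is minimal, hence irreducible, because every proper subset of $I$ lies in $\mathcal K^-$; $\Delta_I$ has the gluing description; and the standard local computation at the separating node gives the cotangent lines and $\mathcal O_{\Delta_I}(\Delta_I)$.

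For (i) your route differs. The paper gets both the existence of $\pi_I$ and the identity $\rho_{\mathsf D_H\sqcup\mathcal K^+}=\pi_I\circ\rho_{\mathsf D_H\sqcup\mathcal K^-}$ in one stroke from Blankers--Bozlee: the two augmented complexes correspond to nested extremal assignments, and the contraction construction is functorial. You instead construct $\pi_I$ directly on $(\mathsf D_H\sqcup\mathcal K^-)$-stable families and then deduce the factorization by density.

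Your version makes explicit that $\pi_I$ is defined on $\mathcal K^-$-stable families, not only on stable ones, which the paper's appeal to functoriality leaves implicit. Your Hassett-style bundle also works even when $\mathcal K^\pm$ are not threshold complexes. Take $\epsilon=1/|I|$, with the heavy markings among the $j\notin I$. Then $\omega_{C/T}\bigl(\sum_{j\notin I}p_j+\epsilon\sum_{i\in I}p_i\bigr)$ has degree $0$ on an $I$-tail. A rational leaf with marking set $J\neq I$ has $J\notin\mathcal K^-$ by (K4), hence $J\not\subseteq I$. It therefore carries a weight-one marking and at least one further marking, so it has positive degree. The remaining components have positive degree by (K5) together with stability of the target type. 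The paper's route, in exchange, gives canonicity and compatibility with the reductions $\rho$ at no extra cost.

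A few points need tightening.

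\emph{Step 4.} ``Reduced source, separated target'' does not suffice for Deligne--Mumford stacks. The Isom space between the two composites is only finite unramified over the source. The closure of a section given over a dense open is guaranteed to map isomorphically onto the source when the source is normal, but not in general. Since $\Mbar_{g,S\sqcup H}$ is smooth, your argument survives, but you should invoke normality. Alternatively, compare the two contractions directly on families, using uniqueness of contractions.

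\emph{Step 2, uniqueness of the tail.} The disjoint alternative never occurs, because two tails with marking set $I$ both contain the points $p_i$, $i\in I$. So each curve in $\Delta_I$ has a unique $I$-tail. Say this explicitly, since it is what makes the gluing map in (ii) injective and hence an isomorphism onto $\Delta_I$. The dichotomy itself rests on minimality and the tree structure of rational tails, not on maximality of $I$.

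\emph{Step 2, subtails with the same marking set.} A proper subtail $Z'\subsetneq Z$ with $\Mar(Z')=I$ is excluded by (K5), not by (K4). In that case the unmarked part $\overline{Z\setminus Z'}$, attached at only two nodes, would contain a component with fewer than three special points.
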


\begin{proof}
By \cite[Proposition~4.16 and Sections~5--6]{BB}, the augmented complexes
\(\mathsf D_H\sqcup\mathcal K^\pm\) correspond to nested extremal assignments.
The only additional components selected by the target assignment are the rational
tails whose marking set is \(I\).  Functoriality of the contraction construction
therefore gives a canonical factorization
\[
\rho_{\mathsf D_H\sqcup\mathcal K^+}
=
\pi_I\circ\rho_{\mathsf D_H\sqcup\mathcal K^-}.
\]
The morphism \(\pi_I\) is proper and is an isomorphism over the common dense open
locus \(\mathcal M_{g,S\sqcup H}\); hence it is birational.

Since every proper subset of \(I\) belongs to \(\mathcal K^-\), an \(I\)-marked
rational tail in a \((\mathsf D_H\sqcup\mathcal K^-)\)-stable curve cannot contain
a proper rational subtail.  It is therefore irreducible.  The standard gluing
description of this boundary gives
\[
\Delta_I\cong
\Mbar_{0,\mathsf D_{\{\bullet_t\}}\sqcup(\mathcal K^-|_I)}
\times
\Mbar_{g,\mathsf D_{H\cup\{\bullet_m\}}
\sqcup(\mathcal K^-|_{S\setminus I})}.
\]
Under this identification, \(\pi_I|_{\Delta_I}\) is the projection to the second
factor followed by replacing \(\bullet_m\) with the collision of the markings in
\(I\).

Finally, the normal bundle of a separating boundary divisor is the tensor product
of the dual cotangent lines at the two branches of the node.  The standard local
calculation for the contraction of a rational tail gives
\[
L_i^-\cong\pi_I^*L_i^+\otimes\mathcal O(\Delta_I),
\quad i\in I;
\qquad
L_j^-\cong\pi_I^*L_j^+,
\quad j\notin I,
\]
together with
\[
\mathcal O_{\Delta_I}(\Delta_I)
\cong
(L_{\bullet_t}^{\rm tail})^\vee
\boxtimes
(L_{\bullet_m}^{\rm main})^\vee;
\]
compare \cite[Lemma~1.3]{CL}.
\end{proof}

\section{Wall-crossing and reconstruction of correlators}

In this section we define and study correlators on $\Mbar_{g,\mathcal K}$.
For ordinary correlators, we use the standard convention
\[
\left\langle \tau_{k_1}\cdots\tau_{k_n}\right\rangle_g
:=
\int_{\Mbar_{g,n}}
\prod_{\alpha=1}^n \psi_\alpha^{k_\alpha}
\]
when $2g-2+n>0$, and $0$ otherwise. We also set an ordinary
correlator equal to $0$ if any descendant exponent is negative. For example,
\[
\left\langle\tau_0^3\right\rangle_0=1,
\qquad
\left\langle\tau_1\right\rangle_1=\frac{1}{24}.
\]

\begin{definition}[Quotient complexes]\label{def:quot_cpx}
Let $\mathcal P=\{I_1,\ldots,I_m\}$ be a collection of pairwise disjoint
nonempty faces of $\mathcal K$. Define the \emph{quotient complex}
\[
\mathcal K/\mathcal P
:=
\left\{
\mathcal J\subseteq\mathcal P:
\bigcup_{I\in\mathcal J} I\in\mathcal K
\right\}
\]
on the vertex set $\mathcal P$.
\end{definition}

Throughout this section, whenever pairwise disjoint nonempty faces
\(I_1,\ldots,I_m\) and a finite set
\(H=\{h_1,\ldots,h_n\}\) of heavy vertices are given, we write
\[
\mathcal P=\{I_1,\ldots,I_m\}
\]
and use the associated augmented quotient type
\[
\mathsf D_H\sqcup(\mathcal K/\mathcal P).
\]

Let $e_1,\ldots,e_m$ be nonnegative integers. If $\pi$ is a partition of
$\{I_1,\ldots,I_m\}$ and $U\in\pi$ is a block, set
\[
I_U:=\bigcup_{I_a\in U} I_a,
\qquad
e_U^\sharp:=\sum_{I_a\in U}e_a-|U|+1.
\]
The partition $\pi$ is called \emph{$\mathcal K$-admissible} if $I_U\in\mathcal K$ for every
block $U\in\pi$. Denote by
$$\Pi_{\mathcal K}(I_1,\ldots,I_m)$$ the set of all
$\mathcal K$-admissible partitions of $\{I_1,\ldots,I_m\}$.

When the light faces are the singletons indexed by \(S\), we identify
partitions of \(\{\{i\}:i\in S\}\) with partitions of \(S\). We write
\(\Pi_{\mathcal K}(S)\) for the set of \(\mathcal K\)-admissible
partitions of \(S\), and for every nonempty subset \(I\subseteq S\), set
\[
e_I^\sharp:=\sum_{i\in I}e_i-|I|+1.
\]

\begin{definition}[Reconstructed correlator]\label{def:formal-corr}
For a finite set $H=\{h_1,\ldots,h_n\}$ of heavy vertices, define
\[
\left\langle
\prod_{\alpha=1}^n\tau_{k_\alpha}
\prod_{a=1}^m\tau_{e_a;I_a}
\right\rangle_g^{\mathcal K,\mathrm{rec}}
:=
\sum_{\pi\in\Pi_{\mathcal K}(I_1,\ldots,I_m)}
(-1)^{m-|\pi|}
\left\langle
\prod_{\alpha=1}^n\tau_{k_\alpha}
\prod_{U\in\pi}\tau_{e_U^\sharp}
\right\rangle_g.
\]
Here the right-hand side is interpreted using the ordinary correlators
defined above.
\end{definition}

\begin{remark}
The reconstructed correlator is defined without requiring the augmented
quotient type
\[
\mathsf D_H\sqcup(\mathcal K/\mathcal P)
\]
to be stable in genus $g$. In particular, when $g=0$, the quotient complex
$\mathcal K/\mathcal P$ need not be at least triparted. For example, for
three light vertices, even if $I_1\cup I_2\in\mathcal K$, so that
$\mathcal K/\mathcal P$ is not at least triparted, we still have
\[
\left\langle
\tau_{0;I_1}\tau_{0;I_2}\tau_{0;I_3}
\right\rangle_0^{\mathcal K,\mathrm{rec}}
=
\left\langle\tau_0^3\right\rangle_0
=
1.
\]
Indeed, every nontrivial block contributes a negative descendant exponent
and hence vanishes by convention. Retaining these formal, non-geometric
correlators is essential for formulating a closed system of Virasoro
constraints.
\end{remark}

\begin{definition}[Geometric correlator]\label{def:geometric-heavy}
Assume that
\[
\mathsf D_H\sqcup(\mathcal K/\mathcal P)
\]
is stable in genus $g$. Define
\[
\left\langle
\prod_{\alpha=1}^n\tau_{k_\alpha}
\prod_{a=1}^m\tau_{e_a;I_a}
\right\rangle_g^{\mathcal K,\mathrm{geom}}
:=
\int_{\Mbar_{g,\mathsf D_H\sqcup(\mathcal K/\mathcal P)}}
\prod_{\alpha=1}^n\psi_{h_\alpha}^{k_\alpha}
\prod_{a=1}^m\psi_{I_a}^{e_a}.
\]
\end{definition}

In the remainder of this section, we prove that the reconstructed and geometric correlators agree in the stable range.

First we have the following wall-crossing result.

\begin{lemma}[Geometric elementary wall-crossing]\label{lem:wall}
In the situation of Proposition \ref{prop:elementary-geometry},
we have
\[
\begin{aligned}
 &\left\langle
   \prod_{\alpha=1}^n\tau_{k_\alpha}
   \prod_{i\in S}\tau_{e_i;i}
  \right\rangle_g^{\mathcal{K}^+,\mathrm{geom}}
 \\
 =&\left\langle
   \prod_{\alpha=1}^n\tau_{k_\alpha}
   \prod_{i\in S}\tau_{e_i;i}
  \right\rangle_g^{\mathcal{K}^-,\mathrm{geom}}
 +(-1)^{|I|-1}
 \int_{\Mbar_{g,\mathsf D_{H\cup\{\bullet\}}\sqcup
                    (\mathcal{K}^-|_{S\setminus I})}}
 \Big(\prod_{\alpha=1}^n\psi_{h_\alpha}^{k_\alpha} \Big)
 \psi_\bullet^{e_I^\sharp}
 \prod_{j\in S\setminus I}\psi_j^{e_j}.
\end{aligned}
\]
The last integral is interpreted as $0$ when \(e_I^\sharp<0\).
\end{lemma}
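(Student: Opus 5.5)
Let $D:=\Delta_I$ and write $\psi_a^\pm=c_1(L_a^\pm)$. We compare the integrands on $\Mbar^-:=\Mbar_{g,\mathsf D_H\sqcup\mathcal K^-}$ via $\pi_I$. By Proposition~\ref{prop:elementary-geometry}(i) $\pi_I$ is birational, so $\int_{\Mbar^-}\pi_I^*\omega=\int_{\Mbar^+}\omega$ for any top class $\omega$ on the target. Hence the $\mathcal K^+$ correlator equals
\[
\int_{\Mbar^-}\prod_\alpha(\psi^-_{h_\alpha})^{k_\alpha}\prod_{j\notin I}(\psi^-_j)^{e_j}\prod_{i\in I}(\psi_i^--D)^{e_i},
\]
using (iii): $\pi_I^*\psi_i^+=\psi_i^--D$ for $i\in I$ and $\pi_I^*\psi_j^+=\psi_j^-$ otherwise. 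The plan is to expand $\prod_{i\in I}(\psi_i^--D)^{e_i}$ and isolate the term without $D$, which gives the $\mathcal K^-$ correlator. The remaining terms are all supported on $D$.

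To evaluate the terms involving $D$, I will first use the relation $\psi_i^-\cdot D=\pi_I^*\psi_i^+\cdot D+D^2$. It is cleaner to run the argument in the reverse direction. Put $x_i:=\pi_I^*\psi_i^+$ and write $\psi_i^-=x_i+D$. Since all $x_i$ for $i\in I$ restrict on $D$ to the same class $\pi^*\psi_{\bullet_m}$ (by (ii), $\pi_I|_D$ factors through the main component with $\bullet_m$ replaced by the collided point), we have $x_i|_D=\psi^{\mathrm{main}}_{\bullet_m}=:y$ for every $i\in I$.

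Then
\[
\prod_{i\in I}x_i^{e_i}-\prod_{i\in I}(x_i+D)^{e_i}
\]
is $D$ times a polynomial, and its restriction to $D$ only needs $x_i|_D=y$ and $D|_D=-\hat\psi_t-\hat\psi_m$, where $\hat\psi$ denotes the tail/main $\psi$ at the node (from (iii)). Rather than expand multinomially, I would use the standard trick. For a single variable, $(x+D)^e-x^e=D\sum_{a+b=e-1}(x+D)^a x^b$. For several factors, telescope by changing one factor at a time, so every term is $D\cdot(\text{polynomial})$. Restricted to $D\cong \Mbar^{\rm tail}\times\Mbar^{\rm main}$, we then have $\psi_i^-|_D=\psi_i^{\rm tail}$ (the cotangent line at $i$ on the tail) and $x_i|_D=\hat\psi_m$, the same class $y$ for every $i$ (since $\pi^*\psi_{\bullet}$ on the main component is $\hat\psi_m$).

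So the difference equals
\[
\int_{\Mbar^{\rm tail}\times\Mbar^{\rm main}} \frac{\prod_i(\psi^{\rm t}_i)^{e_i}-\prod_i y^{e_i}}{-\hat\psi_t-y}\cdot(\text{other main classes}),
\]
where the quotient is read as a polynomial. The tail space is $\Mbar_{0,\mathsf D_{\{\bullet_t\}}\sqcup\mathcal K^-|_I}$ of dimension $|I|-2$. Its $\psi$-integrals agree with $\Mbar_{0,|I|+1}$ integrals, since contracted tails carry subsets of $I$ that are not faces… The key input is the genus-$0$ string/dilaton evaluation on the tail: $\int_{\rm tail}\prod\psi_i^{a_i}\hat\psi_t^{b}=\binom{|I|-2}{a_1,\dots,a_{|I|},b}$, once this tail space is identified with $\Mbar_{0,|I|+1}$ for these classes. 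I would establish that identification by induction using the lemma itself on the smaller complex. Summing the geometric series in $y$ then collapses to $(-1)^{|I|-1}y^{\sum e_i-|I|+1}$, which is exactly $(-1)^{|I|-1}\psi_\bullet^{e_I^\sharp}$. The collapse follows from the multinomial identity $\sum(-1)^b\binom{n-2}{\ldots}=\ldots$, i.e. the generating function $\frac{(\sum t_i)^{|I|-2}}{\ldots}$; alternatively, compute the tail integrals of $\psi_t$-powers via $\langle\tau_0^{|I|}\tau_{b}\rangle_0$-type string equations.

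The main obstacle will be this tail bookkeeping, in two parts: justifying that tail $\psi$-integrals equal ordinary $\Mbar_{0,|I|+1}$ ones despite $\mathcal K^-|_I$ collisions, and correctly tracking signs. If the inductive identification becomes messy, I would instead induct on $|I|$ with $e_i$ reduced one at a time, using $\psi_i^-D=-D^2+\ldots$ and the pushforward $\pi_{I*}$ of powers of $D$: $\pi_{I*}(D^{k})=(-1)^{k-1}\,[\text{codim-}k\text{ locus}]\cdot\psi_\bullet^{k-|I|+1}$-type Segre formula, from $\mathcal O_D(D)$ in (iii). The segre class of the contracted tail bundle directly produces $(-1)^{|I|-1}\psi_\bullet^{e_I^\sharp}$, and negative exponents vanish for dimension reasons.
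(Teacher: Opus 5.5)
There is a genuine gap in the tail evaluation. Your setup agrees with the paper's:
\begin{itemize}
\item you pull back along the birational map $\pi_I$;
\item you write $\psi_i^-=x_i+D$ with $x_i=\pi_I^*\psi_i^+$;
\item you note that each $x_i$ restricts to $y=\psi_{\bullet_m}$ on $D$, and that $D|_D=-\hat\psi_t-\hat\psi_m$.
\end{itemize}

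\textbf{The false step.} You claim that $\psi$-integrals on $\Mbar_{0,\mathsf D_{\{\bullet_t\}}\sqcup(\mathcal K^-|_I)}$ agree with those on $\Mbar_{0,|I|+1}$. This fails for the light classes. Since $I$ is a face of the simplicial complex $\mathcal K^+$ and $\mathcal K^-=\mathcal K^+\setminus\{I\}$, every proper subset of $I$ is a face of $\mathcal K^-$. So the reduction from $\Mbar_{0,|I|+1}$ contracts every tail whose marking set is a proper subset of $I$ with at least two elements. This is the opposite of what your trailing sentence says. Each $\psi_i$, $i\in I$, changes by those boundary divisors.

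Your own relations already determine the tail classes:
\[
\psi_i^-|_D=(x_i+D)|_D=\hat\psi_m-\hat\psi_t-\hat\psi_m=-\hat\psi_t ,
\]
that is, $\psi_i^{\mathrm{tail}}=-\psi_{\bullet_t}$ for every $i\in I$. For $|I|=3$ the tail space is a $\mathbb P^1$ with $\int\psi_i=1-2=-1$, whereas $\int_{\Mbar_{0,4}}\psi_i=1$.

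\textbf{A concrete failure.} Take $I=\{1,2,3\}$ and $(e_1,e_2,e_3)=(1,1,0)$. Your telescoping gives $D\bigl[x_2+(x_1+D)\bigr]$, which restricts to $y+\psi_1^{\mathrm{tail}}$. With the $\Mbar_{0,4}$ value you would get the correction term with sign $-1$ instead of $(-1)^{|I|-1}=+1$. No induction can establish the identification, because it is false.

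Two further problems follow from the same confusion. Your displayed quotient, with numerator $\prod_i(\psi_i^{\mathrm t})^{e_i}-y^E$ and denominator $-\hat\psi_t-y$, is a polynomial only after substituting $\psi_i^{\mathrm t}=-\hat\psi_t$; with independent tail classes it is not defined. And the final ``collapse'' is asserted, not computed.

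\textbf{The fix.} Never introduce light tail classes, which is what the paper does. Keep the integrand in $x_i$ and $D$, as you began: write $\prod_i(x_i+D)^{e_i}-\prod_i x_i^{e_i}=D\cdot P$. With $E=\sum_{i\in I}e_i$ and $N=-\hat\psi_t-y$,
\[
P|_D=\frac{(y+N)^E-y^E}{N}=\sum_{a+b=E-1}y^a(-\hat\psi_t)^b .
\]
The only tail integral you need is $\int\psi_{\bullet_t}^{|I|-2}=1$. This one does transfer from $\Mbar_{0,|I|+1}$, because no contracted tail contains the heavy point $\bullet_t$. The coefficient of $\hat\psi_t^{|I|-2}$ is $(-1)^{|I|}y^{e_I^\sharp}$, or $0$ if $e_I^\sharp<0$. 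Hence the $\mathcal K^-$ correlator minus the $\mathcal K^+$ correlator is $(-1)^{|I|}$ times the main-factor integral, which is the lemma.

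The paper reaches the same coefficient differently. It expands in powers of $\Delta_I$, uses $\Delta_I^q=\iota_*(-\psi_{\bullet_t}-\psi_{\bullet_m})^{q-1}$, and proves $\sum_q\binom Eq\binom{q-1}{|I|-2}(-1)^{q-1}=(-1)^{|I|}$. Once repaired, your geometric-series form is a slightly quicker way to do that coefficient extraction.

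Your fallback ``Segre'' sketch is essentially the paper's route, but as written it has two omissions. It drops the factor $\binom{k-1}{|I|-2}$ in
\[
\pi_{I*}(D^k)=(-1)^{k-1}\binom{k-1}{|I|-2}\,j_*\psi_\bullet^{k-|I|+1},
\]
where $j$ is the inclusion of the collision locus, of codimension $|I|-1$. It also omits the summation over $k$ that produces the sign $(-1)^{|I|-1}$.
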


\begin{proof}
Let \(E=\sum_{i\in I}e_i\) and \(r=|I|\).  By Proposition \ref{prop:elementary-geometry},
\[
  \psi_i^- =\pi_I^*\psi_i^+ +\Delta_I,
  \quad i\in I,
\]
while every other cotangent line class pulls back unchanged.  In any term containing \(\Delta_I\), the restrictions of the classes \(\pi_I^*\psi_i^+\), \(i\in I\), all restrict to the main-branch cotangent line class \(\psi_{\bullet_m}\).  The self-intersection formula is
\[
  \Delta_I^q
  =\iota_*
   \bigl(-\psi_{\bullet_t}-\psi_{\bullet_m}\bigr)^{q-1},
  \qquad q\ge1,
\]
where \(\iota:\Delta_I\hookrightarrow\Mbar_{g,\mathsf D_H\sqcup\mathcal{K}^-}\).

The tail factor has dimension \(r-2\), and
\[
 \int_{\Mbar_{0,\mathsf D_{\{\bullet_t\}}\sqcup(\mathcal{K}^-|_I)}}
 \psi_{\bullet_t}^{r-2}=1.
\]
This equality is geometric and independent of reconstruction.  Indeed, the Blankers--Bozlee reduction
\[
 \Mbar_{0,\{\bullet_t\}\sqcup I}
 \longrightarrow
 \Mbar_{0,\mathsf D_{\{\bullet_t\}}\sqcup(\mathcal{K}^-|_I)}
\]
is proper birational, and no contracted tail contains the isolated marking \(\bullet_t\); hence the heavy cotangent line class pulls back unchanged.  The integral is therefore the ordinary value
\(\int_{\Mbar_{0,r+1}}\psi_{\bullet_t}^{r-2}=1\).

Expanding the factors
\[
    \prod_{i \in I} ( \pi_I^* \psi_i^+ + \Delta_I )^{e_i},
\]
the total contribution supported on \(\Delta_I\) is the main-factor integral in the statement multiplied by
\[
  \sum_{q=1}^{E}
  \binom{E}{q}\binom{q-1}{r-2}(-1)^{q-1}.
\]
If \(e_I^\sharp<0\), both this contribution and the displayed main-factor integral vanish.  Otherwise \(E\ge r-1\), and coefficient extraction from
\[
 \sum_{q=1}^{E}(-1)^{q-1}\binom Eq(1+z)^{q-1}
 =\frac{1-(-z)^E}{1+z}
\]
gives
\[
  \sum_{q=1}^{E}
  \binom{E}{q}\binom{q-1}{r-2}(-1)^{q-1}
  =(-1)^r.
\]
Thus the source integral equals the target integral plus \((-1)^r\) times the main-factor integral.  Rearranging proves the formula.
\end{proof}

Two kinds of correlators defined in Definitions \ref{def:formal-corr} and \ref{def:geometric-heavy} are related:

\begin{theorem}[Reconstruction]\label{thm:reconstruction}
If the augmented quotient type
\[
  \mathsf D_H\sqcup(\mathcal K/\mathcal P)
\]
is stable in genus \(g\), then
\[
 {
 \left\langle
  \prod_{\alpha=1}^n\tau_{k_\alpha}
  \prod_{a=1}^m\tau_{e_a;I_a}
 \right\rangle_g^{\mathcal{K},\mathrm{geom}}
 =
 \left\langle
  \prod_{\alpha=1}^n\tau_{k_\alpha}
  \prod_{a=1}^m\tau_{e_a;I_a}
 \right\rangle_g^{\mathcal{K},\mathrm{rec}}.}
\]
In particular, for singleton light labels,
\[
 \left\langle
  \prod_{i\in S}\tau_{e_i;i}
 \right\rangle_g^{\mathcal{K},\mathrm{geom}}
 =
 \sum_{p\in\Pi_{\mathcal{K}}(S)}(-1)^{|S|-|p|}
 \left\langle\prod_{I\in p}\tau_{e_I^\sharp}\right\rangle_g.
\]
\end{theorem}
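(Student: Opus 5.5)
Throughout, \(\mathcal P=\{I_1,\ldots,I_m\}\). The proof is an induction over a chain of elementary wall-crossings, using Lemma~\ref{lem:wall} as the only geometric input.

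\emph{Reduction to singleton labels.} By Definition~\ref{def:geometric-heavy}, the geometric correlator with light faces \(I_1,\ldots,I_m\) is an integral on \(\Mbar_{g,\mathsf D_H\sqcup(\mathcal K/\mathcal P)}\). This is the singleton-label case for the complex \(\mathcal K':=\mathcal K/\mathcal P\) on the vertex set \(\mathcal P\). The reconstructed side also matches. A partition of \(\mathcal P\) is \(\mathcal K\)-admissible exactly when it is \(\mathcal K'\)-admissible. For a block \(U\), \(e_U^\sharp\) is the singleton-label exponent \(\sum_{I_a\in U} e_a-|U|+1\) computed in \(\mathcal K'\). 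Hence it suffices to prove the singleton formula, with heavy markings \(H\) included, for an arbitrary complex \(\mathcal K\) on \(S\) such that \(\mathsf D_H\sqcup\mathcal K\) is stable in genus \(g\).

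\emph{Induction on the number of non-singleton faces.} Let \(N(\mathcal K)\) be the number of faces of \(\mathcal K\) of size at least \(2\).

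In the base case \(N(\mathcal K)=0\), \(\mathcal K\) is discrete. No markings collide, and \(\Mbar_{g,\mathsf D_H\sqcup\mathcal K}=\Mbar_{g,H\sqcup S}\). The only admissible partition is the partition into singletons, which has \(e_{\{i\}}^\sharp=e_i\), so both sides equal the ordinary correlator.

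For the inductive step, choose a maximal face \(I\) with \(|I|\ge2\) and set \(\mathcal K^-=\mathcal K\setminus\{I\}\). This is a simplicial complex because \(I\) is maximal. Stability of \(\mathsf D_H\sqcup\mathcal K^-\) follows from that of \(\mathsf D_H\sqcup\mathcal K\), since removing a face only removes partitions. Lemma~\ref{lem:wall} then writes the \(\mathcal K\)-correlator as the \(\mathcal K^-\)-correlator plus \((-1)^{|I|-1}\) times an integral over \(\Mbar_{g,\mathsf D_{H\cup\{\bullet\}}\sqcup(\mathcal K^-|_{S\setminus I})}\) with \(\psi_\bullet^{e_I^\sharp}\).

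Both terms have strictly smaller \(N\): \(\mathcal K^-\) loses the face \(I\), and \(\mathcal K^-|_{S\setminus I}\) is a subcomplex of \(\mathcal K^-\). The first term therefore expands, by the induction hypothesis, as the sum over \(\mathcal K^-\)-admissible partitions of \(S\). The second term has \(\bullet\) heavy with descendant \(e_I^\sharp\), and expands as the sum over admissible partitions \(p'\) of \(S\setminus I\).

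\emph{Matching terms.} The \(\mathcal K\)-admissible partitions of \(S\) split into two kinds. Those not containing \(I\) as a block are exactly the \(\mathcal K^-\)-admissible ones. Those containing \(I\) as a block are \(\{I\}\cup p'\) with \(p'\) admissible on \(S\setminus I\); such a \(p'\) is automatically \(\mathcal K^-\)-admissible, since faces disjoint from \(I\) are unaffected. For these the signs agree: \((-1)^{|I|-1}(-1)^{|S\setminus I|-|p'|}=(-1)^{|S|-(|p'|+1)}\). The ordinary correlators also agree, since the heavy \(\bullet\) carries \(\tau_{e_I^\sharp}\). The convention that \(e_I^\sharp<0\) gives zero matches the ordinary-correlator convention. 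This closes the induction.

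\emph{Main obstacle.} The delicate point is stability of the boundary type \(\mathsf D_{H\cup\{\bullet\}}\sqcup(\mathcal K^-|_{S\setminus I})\), which is needed to invoke the induction hypothesis on the second term. In genus \(0\), suppose some partition of \(S\setminus I\) into faces had at most \(2\) blocks. Adjoining the block \(I\) would give a \(\mathcal K\)-partition of \(S\) with at most \(3\) blocks. Together with the heavy markings this should still yield at least three special points, but the case \(H=\emptyset\) with exactly three blocks needs care. This is the hypothesis Proposition~\ref{prop:elementary-geometry} already assumes, so I would verify it there, including the degenerate case where \(S\setminus I\) is empty. If a genuinely unstable case arises, I would check directly that both sides vanish or agree, using the formal-correlator remark.
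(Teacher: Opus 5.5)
Your proposal is correct and is essentially the paper's proof. It uses the same chain of elementary walls through Lemma~\ref{lem:wall}, and it matches the new admissible partitions (those having \(I\) as a block) against the boundary term; the only change is that your single induction on the number of non-singleton faces, justified by \(\mathcal K^-|_{S\setminus I}\subseteq\mathcal K^-\), replaces the paper's two-level induction on vertices and faces.

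The stability point you leave open closes at once. A partition for the boundary type \(\mathsf D_{H\cup\{\bullet\}}\sqcup(\mathcal K^-|_{S\setminus I})\) has \(|H|+1+|p'|\) blocks. That is exactly as many as the \(\mathsf D_H\sqcup\mathcal K\)-partition formed by the heavy singletons, \(I\), and \(p'\), so it is at least three when \(g=0\), including the cases \(H=\emptyset\) and \(S\setminus I=\emptyset\).
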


\begin{proof}
We use a simultaneous two-level induction.  The outer induction is on the total number
\[
  N=m+n
\]
of vertices of the augmented quotient complex, heavy vertices included.  For a fixed \(N\), the inner induction is on the number of non-singleton faces of the light quotient complex.  The cases \(N=0\) and \(N=1\) are immediate, and for every \(N\) the discrete light complex is the ordinary pointed moduli space, so the formula is the identity.

Fix \(N\), assume the theorem for all stable augmented complexes with fewer than \(N\) vertices, and let \(L\) be a light quotient complex with \(m\) vertices.  Order its non-singleton faces
\[
  J_1,\ldots,J_r
\]
by nondecreasing cardinality, and let \(L_j\) be obtained from the discrete complex by adjoining \(J_1,\ldots,J_j\).  Every proper face of \(J_j\) is already in \(L_{j-1}\), while no strict superset of \(J_j\) is in \(L_j\); hence \(J_j\) is maximal in \(L_j\).  If \(g=0\), every intermediate augmented complex is at least triparted because it is a subcomplex of the final one.

Assume the reconstruction formula has been established for \(L_{j-1}\).  Apply the geometric wall-crossing formula of Lemma \ref{lem:wall} to the face \(J_j\).  Its main factor has
\[
  N-|J_j|+1<N
\]
marked vertices, because the whole block \(J_j\) is replaced by one isolated attaching marking.  The outer induction therefore identifies that \emph{geometric} main-factor integral with its reconstructed partition sum.  This is the step that removes any circular use of reconstruction in the wall-crossing lemma.

The reconstructed main factor runs over the admissible partitions \(q\) of the remaining light vertices; the new attaching vertex is isolated and hence remains a singleton.  Multiplication by the wall sign gives
\[
 (-1)^{|J_j|-1}(-1)^{m-|J_j|-|q|}
 =(-1)^{m-(|q|+1)}.
\]
These are exactly the signs of the admissible partitions for \(L_j\) that were not admissible for \(L_{j-1}\), namely the partitions having \(J_j\) as one block.  This proves the inner induction.  At \(j=r\) it proves the theorem for \(L\), and hence completes the outer induction.
\end{proof}

\begin{remark}
The simultaneous induction controls nested boundary strata.  A direct pullback expansion on \(\Mbar_{g,S}\) contains intersections such as
\(D_{\{1,2\}}\cap D_{\{1,2,3\}}\).  In the proof above, the smaller face is processed at an earlier inner wall, while the main factor at the later wall is handled by the outer induction on the number of marked vertices.  No nested stratum is discarded or silently replaced by a collection of disjoint tails.
\end{remark}

\section{Corrected Virasoro recursions for a fixed complex}\label{sec:recursions}

This section establishes Virasoro recursions for the correlators introduced above. Throughout this section, a correlator with superscript \(\mathcal K\) means the reconstructed correlator of Definition \ref{def:formal-corr}. By Theorem \ref{thm:reconstruction}, it agrees with the corresponding geometric correlator whenever the augmented quotient type is stable.


For \(k\ge0\) and \(e\ge0\), define
\[
  h_{k;e}:=\frac{(2k+2e+1)!!}{(2e-1)!!}.
\]
For the string index, set
\[
  h_{-1;e}:=1,\quad e\ge0,
\]
and, for every \(k\ge-1\), set \(h_{k;e}=0\) when \(e<0\).  If \(\be=(e_1,\ldots,e_m)\), define
\[
  h_{k;\be}:=
  \sum_{\emptyset\ne J\subseteq[m]}(-1)^{|J|-1}
  h_{k;\,\sum_{j\in J}e_j-|J|+1}.
\]

\begin{lemma}[Möbius identity]\label{lem:mobius}
For every nonempty \(T\subseteq[m]\),
\[
 h_{k;\,\sum_{i\in T}e_i-|T|+1}
 =\sum_{\emptyset\ne J\subseteq T}(-1)^{|J|-1}h_{k;e_J},
\]
where \(e_J=(e_j)_{j\in J}\).
\end{lemma}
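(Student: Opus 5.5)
The identity is a Möbius inversion on the Boolean lattice of subsets, and I will prove it by substituting the definition of \(h_{k;e_J}\) into the right-hand side and exchanging the order of summation. For \(J\subseteq[m]\) write \(f(J):=h_{k;\,\sum_{j\in J}e_j-|J|+1}\) for nonempty \(J\). The definition of \(h_{k;\be}\) applied to the subvector \(e_J\) reads
\[
 h_{k;e_J}=\sum_{\emptyset\ne K\subseteq J}(-1)^{|K|-1}f(K).
\]
This holds because the shifted sum \(\sum_{j\in K}e_j-|K|+1\) depends only on the entries indexed by \(K\). The conventions \(h_{k;e}=0\) for \(e<0\) and \(h_{-1;e}=1\) for \(e\ge0\) enter only through \(f\), and \(f\) is treated as an arbitrary function of subsets. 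No property of the double factorials is needed.

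Substituting this into the right-hand side gives
\[
 \sum_{\emptyset\ne J\subseteq T}(-1)^{|J|-1}\sum_{\emptyset\ne K\subseteq J}(-1)^{|K|-1}f(K)
 =\sum_{\emptyset\ne K\subseteq T}(-1)^{|K|}f(K)\sum_{K\subseteq J\subseteq T}(-1)^{|J|}.
\]
Here the sign is simplified using \((-1)^{|J|-1}(-1)^{|K|-1}=(-1)^{|J|+|K|}\). Setting \(J=K\sqcup J'\) with \(J'\subseteq T\setminus K\), the inner sum becomes
\[
 (-1)^{|K|}\sum_{J'\subseteq T\setminus K}(-1)^{|J'|}=(-1)^{|K|}\,(1-1)^{|T\setminus K|}.
\]
This vanishes unless \(K=T\), in which case it equals \((-1)^{|T|}\).

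Hence the double sum collapses to \((-1)^{|T|}(-1)^{|T|}f(T)=f(T)\), which is the left-hand side. There is no real obstacle. The only point to check is that the nonemptiness restrictions on \(J\) and \(K\) are consistent: \(K\) is nonempty by definition, and \(K\subseteq J\) then forces \(J\) to be nonempty, so no boundary term is lost.
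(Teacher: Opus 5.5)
Your proof is correct and is essentially the paper's argument: substitute the definition of \(h_{k;e_J}\), interchange the two subset sums, and observe that the coefficient of \(f(K)\) is \(\sum_{J'\subseteq T\setminus K}(-1)^{|J'|}=(1-1)^{|T\setminus K|}\), which vanishes unless \(K=T\). Your remark that \(f\) can be treated as an arbitrary set function, so that the conventions for \(h_{-1;e}\) and for negative \(e\) play no role, is a useful clarification that the paper leaves implicit.
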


\begin{proof}
Substitute the definition of \(h_{k;e_J}\) and interchange the two subset sums.  The coefficient of the term indexed by \(L\subseteq T\) is
\[
 \sum_{H\subseteq T\setminus L}(-1)^{|H|},
\]
which vanishes unless \(L=T\), in which case it is equal to one.
\end{proof}


We first recall the ordinary Virasoro recursion.

\begin{proposition}[Ordinary Virasoro recursion]\label{prop:ordinary-corrected}
For \(r,s\ge0\), set
\[
\begin{aligned}
 \mathcal B_{r,s}^{g}(\be)
 &:=
 \left\langle\tau_r\tau_s\prod_{i\in S}\tau_{e_i}\right\rangle_{g-1}
 +\sum_{I\subseteq S}\sum_{g_1+g_2=g}
 \left\langle\tau_r\prod_{i\in I}\tau_{e_i}\right\rangle_{g_1}
 \left\langle\tau_s\prod_{i\in S\setminus I}\tau_{e_i}\right\rangle_{g_2}.
\end{aligned}
\]
Then, for \(k\ge-1\), \(g\ge0\), and \(\be=(e_i)_{i\in S}\),
\[
\begin{aligned}
\left\langle\tau_{k+1}\prod_{i\in S}\tau_{e_i}\right\rangle_g
=&\frac{1}{(2k+3)!!}
\Bigg[
\sum_{j\in S}h_{k;e_j}
\left\langle\tau_{e_j+k}\prod_{i\ne j}\tau_{e_i}\right\rangle_g
+\frac12
\sum_{\substack{r+s=k-1\\r,s\ge0}}
(2r+1)!!(2s+1)!!\,\mathcal B_{r,s}^{g}(\be)
\Bigg]\\
&+\delta_{k,-1}\delta_{g,0}\delta_{|S|,2}
\prod_{i\in S}\delta_{e_i,0}
+\frac1{24}\delta_{k,0}\delta_{g,1}\delta_{S,\emptyset}.
\end{aligned}
\]
The quadratic sum is empty for \(k=-1,0\).
\end{proposition}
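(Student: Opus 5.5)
This is the Witten--Kontsevich Virasoro constraint \(L_k Z=0\) for \(k\ge-1\), written out coefficientwise as a recursion. We therefore propose to take the Witten--Kontsevich theorem \cite{Witten,Kontsevich,KS} as input, in the Dijkgraaf--Verlinde--Verlinde form
\[
\langle\tau_{k+1}\tau_{\be}\rangle_g=\frac{1}{(2k+3)!!}\Big[\sum_j\frac{(2k+2e_j+1)!!}{(2e_j-1)!!}\langle\tau_{e_j+k}\cdots\rangle_g+\tfrac12\sum_{r+s=k-1}(2r+1)!!(2s+1)!!(\cdots)\Big],
\]
and then match normalizations and boundary conventions with the statement. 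The coefficient \(h_{k;e}\) agrees with \((2k+2e+1)!!/(2e-1)!!\) under \((-1)!!=1\). The genus splitting in \(\mathcal B_{r,s}^g\) ranges over all \(I\subseteq S\) and all \(g_1+g_2=g\). Unstable factors vanish by our convention that a correlator with \(2g-2+n\le0\) is \(0\).

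The work is in the cases where this convention disagrees with the generating-function identity.

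\textbf{\(k=-1\) (string equation).} Here \(h_{-1;e}=1\) and there is no quadratic term. Since \((2k+3)!!=1!!=1\), the equation reads \(\langle\tau_0\prod\tau_{e_i}\rangle_g=\sum_j\langle\tau_{e_j-1}\cdots\rangle_g\). This holds whenever \(2g-2+|S|>0\). It fails only for \(g=0\), \(|S|=2\): there the right side consists of unstable \((0,2)\) correlators, which are \(0\), while the left side \(\langle\tau_0\tau_{e_1}\tau_{e_2}\rangle_0\) equals \(1\) exactly when \(e_1=e_2=0\). This accounts for the first delta term. For \(g=0\) and \(|S|\le1\), both sides vanish.

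\textbf{\(k=0\) (dilaton equation).} The identity \(\langle\tau_1\prod\tau_{e_i}\rangle_g=(2g-2+|S|)\langle\cdots\rangle_g\) is recovered as follows. Dividing by \(3\), we use \(h_{0;e}=2e+1\) together with the string/dilaton bookkeeping in the form \(\sum_j(2e_j+1)=2\sum e_j+|S|\). The dimension constraint \(\sum e_j=3g-3+|S|\) then gives \(\tfrac13(6g-6+3|S|)\), as required. The exceptional case is \(g=1\), \(S=\emptyset\), where \(\langle\tau_1\rangle_1=1/24\) has an empty right side; this gives the second delta term.

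\textbf{\(k\ge1\).} I would check that no unstable terms contribute spuriously. A split factor \(\langle\tau_r\prod_{I}\rangle_{g_1}\) that is unstable is zero in the convention. In the generating-function identity such terms also drop out, apart from the terms already absorbed into the \(k=-1,0\) operators. Likewise the terms with \(e_j+k<0\) are zero because \(h\) vanishes.

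The main obstacle is exactly this bookkeeping. The standard statement \(L_kZ=0\) contains \(t_0^2/2\) and \(1/24\) constants in \(L_{-1}\) and \(L_0\), and one must verify that these are the \emph{only} discrepancies with the "unstable means zero" convention. I would do this by extracting the coefficient of \(\prod t_{e_i}\) in \(L_kZ=0\) for \(F=\log Z\): expand \(\partial^2 Z/Z=F''+F'F'\) and note that disconnected correlators appear only in the \(\mathcal B\) term. The constants then come precisely from the \(t_0^2\) and \(1/24\) pieces of the operators.
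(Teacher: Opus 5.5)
Your proposal is correct and is essentially the paper's argument. Both extract coefficients from $L_kZ=0$, with the $t_0^2/2$ term of $L_{-1}$ and the constant term of $L_0$ as the only sources of the two delta corrections, nothing extra for $k\ge1$, and your string/dilaton cross-checks for $k=-1,0$ as a harmless supplement. There are two small slips. First, for $k=-1$ the terms with $e_j=0$ vanish because $\langle\tau_{-1}\cdots\rangle_g=0$ by the negative-exponent convention, not because $h$ vanishes (indeed $h_{-1;0}=1$). Second, the constant in $2L_0$ is $\tfrac18$, not $\tfrac1{24}$; it is this constant that produces the value $\langle\tau_1\rangle_1=\tfrac1{24}$.
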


\begin{proof}
Extract coefficients from the ordinary Virasoro equations.  For the string operator
\[
  2L_{-1}=-\partial_{t_0}+\sum_{i\ge1}t_i\partial_{t_{i-1}}+\frac{t_0^2}{2},
\]
the quadratic term produces precisely the exceptional value
\(\langle\tau_0^3\rangle_0=1\).  For
\[
  2L_0=-3\partial_{t_1}+\sum_{i\ge0}(2i+1)t_i\partial_{t_i}+\frac18,
\]
the constant term produces \(\langle\tau_1\rangle_1=1/24\).  For \(k\ge1\), no additional constant term occurs.
\end{proof}


We now transfer the ordinary Virasoro recursion to correlators on
\(\Mbar_{g,\mathcal K}\).  For a finite complex \(\mathcal K\) on \(S\), define
\[
 \mathsf S_{\mathcal K}(\be)
 :=
 \sum_{\substack{\pi\in\Pi_{\mathcal K}(S),\ |\pi|=2\\
                   e_I^\sharp=0\ \text{for every }I\in \pi}}
 (-1)^{|S|-2}.
\]
This is the contribution obtained by reconstructing the ordinary three-point
string constant in the presence of the distinguished heavy \(\tau_0\) insertion.

\begin{theorem}[Heavy Virasoro recursion]\label{thm:heavy-corrected}
Let \(\mathcal K\) be a simplicial complex on a finite set \(S\).  For \(k\ge-1\),
\[
\begin{aligned}
\left\langle\tau_{k+1}\prod_{i\in S}\tau_{e_i;i}\right\rangle_g^{\mathcal K}
=\frac{1}{(2k+3)!!}
\Bigg[&
\sum_{\substack{\emptyset\ne J\subseteq S\\J\in \mathcal K}}
 h_{k;e_J}
\left\langle
 \tau_{\sum_{j\in J}e_j-|J|+1+k;J}
 \prod_{j\notin J}\tau_{e_j;j}
\right\rangle_g^{\mathcal K}
\\
&+\frac12\sum_{\substack{r+s=k-1\\r,s\ge0}}
(2r+1)!!(2s+1)!!
\Bigg(
\left\langle\tau_r\tau_s\prod_{i\in S}\tau_{e_i;i}\right\rangle_{g-1}^{\mathcal K}
\\
&+\sum_{I\subseteq S}\sum_{g_1+g_2=g}
\left\langle\tau_r\prod_{i\in I}\tau_{e_i;i}\right\rangle_{g_1}^{\mathcal K|_I}
\left\langle\tau_s\prod_{i\notin I}\tau_{e_i;i}\right\rangle_{g_2}^{\mathcal K|_{S\setminus I}}
\Bigg)
\Bigg]
\\
&\qquad+
\delta_{k,-1}\delta_{g,0}\,\mathsf S_{\mathcal K}(\be)
+\frac1{24}\delta_{k,0}\delta_{g,1}\delta_{S,\emptyset}.
\end{aligned}
\]
\end{theorem}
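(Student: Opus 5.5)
We prove the statement by expanding both sides via Definition~\ref{def:formal-corr} into ordinary correlators and then applying Proposition~\ref{prop:ordinary-corrected} termwise. The left side is
\[
\sum_{\pi\in\Pi_{\mathcal K}(S)}(-1)^{|S|-|\pi|}\Big\langle\tau_{k+1}\prod_{U\in\pi}\tau_{e_U^\sharp}\Big\rangle_g .
\]
We apply the ordinary recursion to each summand, with light insertions indexed by the blocks $U\in\pi$ and exponents $e_U^\sharp$. This produces three kinds of terms: a linear term $h_{k;e_U^\sharp}\langle\tau_{e_U^\sharp+k}\prod_{V\ne U}\tau_{e_V^\sharp}\rangle_g$ for each block $U$, the quadratic terms $\mathcal B^g_{r,s}$, and the exceptional constants. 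We then regroup each kind according to the $\mathcal K$-structure.

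\emph{Linear terms.} We expand $h_{k;e_U^\sharp}$ by the Möbius identity (Lemma~\ref{lem:mobius}), applied with $T=U$ and the vector $(e_i)_{i\in U}$. This requires $\sum_{i\in U}e_i-|U|+1=e_U^\sharp$, which holds by definition. The result is
\[
h_{k;e_U^\sharp}=\sum_{\emptyset\ne J\subseteq U}(-1)^{|J|-1}h_{k;e_J}.
\]
We now swap sums, summing over $J\in\mathcal K$ (automatic, since $J\subseteq U\in\mathcal K$) and then over pairs $(\pi,U)$ with $J\subseteq U$. The target is to recognise
\[
\sum_{(\pi,U):\,J\subseteq U}(-1)^{|S|-|\pi|}(-1)^{|J|-1}\Big\langle\tau_{e_U^\sharp+k}\prod_{V\neq U}\tau_{e_V^\sharp}\Big\rangle_g
\]
as the reconstructed correlator $\langle\tau_{e_J^\sharp+k;J}\prod_{j\notin J}\tau_{e_j;j}\rangle^{\mathcal K}_g$ on the quotient by $\mathcal P=\{J\}\cup\{\{j\}:j\notin J\}$. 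Admissible partitions of $\mathcal P$ correspond bijectively to admissible $\pi$ with $J$ contained in a block $U$. The exponents must match: the block containing $J$ has exponent
\[
(e_J^\sharp+k)+\sum_{i\in U\setminus J}e_i-(|U|-|J|+1)+1=e_U^\sharp+k,
\]
which agrees. The signs must match: $(-1)^{|\mathcal P|-|\pi|}$ with $|\mathcal P|=|S|-|J|+1$ equals $(-1)^{|S|-|\pi|}(-1)^{|J|-1}$, which also agrees. Convention subtleties (a negative exponent giving $0$, and $h_{k;e}=0$ for $e<0$) should be consistent because both sides vanish together. The string case $k=-1$, where $h_{-1;e}=1$ for $e\ge0$, should be checked separately.

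\emph{Quadratic terms.} The genus-reducing term is immediate: $\tau_r,\tau_s$ are heavy insertions, so reconstruction commutes with it. For the splitting term, each admissible $\pi$ together with a subset of its blocks determines a set $I\subseteq S$ that is a union of blocks. Pairs consisting of an admissible partition of $I$ and an admissible partition of $S\setminus I$ (for $\mathcal K|_I$ and $\mathcal K|_{S\setminus I}$ respectively) correspond bijectively to pairs $(\pi,\ \text{subset of blocks})$. The signs factor as $(-1)^{|I|-|\pi_1|}(-1)^{|S\setminus I|-|\pi_2|}$, which gives the product of reconstructed correlators.

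\emph{Constants.} The genus-one constant survives only when $S=\emptyset$. The genus-zero string constant arises exactly from those admissible $\pi$ with $|\pi|=2$ and all $e_U^\sharp=0$, with sign $(-1)^{|S|-2}$; summing these gives $\mathsf S_{\mathcal K}(\be)$.

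The main obstacle is the linear-term bookkeeping. The Möbius inversion must exactly reproduce the sum over faces $J\in\mathcal K$ with the correct reconstructed quotient correlators. The edge conventions also need care: negative $e^\sharp$ in blocks, $h_{-1;e}$, and the unstable ordinary correlators (set to $0$) versus the exceptional $\delta$-terms. The risk is that some exceptional contribution gets double counted or lost when blocks have $e_U^\sharp<0$, so I would verify the $k=-1$, $g=0$ small cases explicitly.
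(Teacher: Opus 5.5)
Your proposal is correct and is essentially the paper's own proof: expand the left side by reconstruction, apply Proposition~\ref{prop:ordinary-corrected} to every partition term, transfer the linear terms by Lemma~\ref{lem:mobius} via the partition, exponent and sign bijection you wrote out (which the paper leaves implicit), factor the splitting terms, and collect the constants into $\mathsf S_{\mathcal K}(\be)$ and $\frac1{24}$. One correction to your edge-case remark: no separate check is needed (the M\"obius identity is purely formal, so $k=-1$ is covered, and the ordinary recursion holds trivially for partitions having a block with $e_U^\sharp<0$, which you should keep in the expansion), but the reason is not that both sides vanish together, since $h_{k;e_J}$ can be nonzero when $e_J^\sharp+k<0$ (e.g.\ $k=-1$, $J=\{1,2\}$, $e_1=e_2=0$ gives $h_{-1;e_J}=2$); such right-hand correlators must be read as the partition sums of Definition~\ref{def:formal-corr} extended to arbitrary integer exponents---exactly what your bijection produces, and possibly nonzero once $J$ merges with further labels---rather than set to $0$.
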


\begin{proof}
By the definition of the superscript \(\mathcal K\),
\[
 \left\langle\tau_{k+1}\prod_{i\in S}\tau_{e_i;i}\right\rangle_g^{\mathcal K}
 =\sum_{\pi\in\Pi_{\mathcal K}(S)}(-1)^{|S|-|\pi|}
 \left\langle\tau_{k+1}\prod_{I\in \pi}\tau_{e_I^\sharp}\right\rangle_g.
\]
Apply Proposition \ref{prop:ordinary-corrected} to each summand.  The linear terms are transferred by Lemma \ref{lem:mobius}, while the nonseparating term reconstructs directly.  For the separating term, choosing an admissible partition and then distributing its blocks between the two factors is equivalent to choosing a subset \(I\subseteq S\), together with admissible partitions of \(I\) and \(S\setminus I\); the corresponding signs factor accordingly.

For \(k=-1\), the ordinary exceptional term occurs precisely when the admissible partition \(\pi\) has two blocks and both merged exponents are zero.  Summing the corresponding signs gives \(\mathsf S_{\mathcal K}(\be)\).  For \(k=0\), the ordinary exceptional term occurs only for the empty partition, hence only when \(S=\emptyset\), and contributes \(1/24\).
\end{proof}


We now consider the case where there is a distinguished light marking. Fix \(b\in S\), put \(T=S\setminus\{b\}\), and write \(\mathcal{K}_T=\mathcal{K}|_T\). For a partition \(\pi\) of a finite set \(A\), write
\[
\codim(\pi):=|A|-|\pi|.
\]

\begin{lemma}[Heavy--light comparison]\label{lem:heavy-light-corrected}
For every \(k\ge-1\),
\[
\begin{aligned}
&\left\langle\tau_{k+1;b}\prod_{i\in T}\tau_{e_i;i}\right\rangle_g^{\mathcal{K}}
=\left\langle\tau_{k+1}\prod_{i\in T}\tau_{e_i;i}\right\rangle_g^{\mathcal{K}_T}
-
\sum_{\substack{\emptyset\ne I\subseteq T\\I\cup\{b\}\in \mathcal{K}}}
\left\langle
 \tau_{k+1+\sum_{i\in I}e_i-|I|;I\cup\{b\}}
 \prod_{j\in T\setminus I}\tau_{e_j;j}
\right\rangle_g^{\mathcal{K}}.
\end{aligned}
\]
\end{lemma}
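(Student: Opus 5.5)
The plan is to prove the identity purely combinatorially from Definition~\ref{def:formal-corr}, since throughout this section the superscript \(\mathcal K\) denotes the reconstructed correlator. Geometry and stability play no role. I would expand both sides as signed sums of ordinary correlators and match them term by term. The matching is a Möbius inversion over subsets, in the spirit of Lemma~\ref{lem:mobius}.

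\emph{Expanding the left-hand side.} First expand the left-hand side as
\[
\sum_{\pi\in\Pi_{\mathcal K}(S)}(-1)^{|S|-|\pi|}
\Bigl\langle\prod_{U\in\pi}\tau_{e_U^\sharp}\Bigr\rangle_g,
\]
where \(e_b:=k+1\). Then split according to the block \(B\in\pi\) containing \(b\), writing \(B=\{b\}\cup I'\) with \(I'\subseteq T\).
\begin{itemize}
\item If \(I'=\emptyset\), then \(\pi\) is \(\{\{b\}\}\) together with a \(\mathcal K_T\)-admissible partition \(\pi'\) of \(T\). The block \(\{b\}\) contributes \(\tau_{k+1}\). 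The sign is \((-1)^{|S|-|\pi|}=(-1)^{|T|-|\pi'|}\), since \(|S|=|T|+1\) and \(|\pi|=|\pi'|+1\). These terms therefore sum to exactly \(\langle\tau_{k+1}\prod_{i\in T}\tau_{e_i;i}\rangle_g^{\mathcal K_T}\), the first term on the right.
\item If \(I'\neq\emptyset\), the block exponent is
\[
e_B^\sharp=k+1+\sum_{i\in I'}e_i-|I'|.
\]
The remaining blocks form a partition \(\sigma\) of \(T\setminus I'\) whose blocks lie in \(\mathcal K\).
\end{itemize}

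\emph{Expanding each subtracted correlator.} Fix a nonempty \(I\subseteq T\) with \(I\cup\{b\}\in\mathcal K\). Its light faces are \(I\cup\{b\}\) together with the singletons of \(T\setminus I\), so there are \(m=1+|T\setminus I|\) of them. An admissible partition of these faces merges \(I\cup\{b\}\) with some set \(I'\setminus I\) of singletons, where \(I\subseteq I'\) and \(I'\cup\{b\}\in\mathcal K\), and partitions the rest by some \(\sigma\) as above.

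The merged exponent is
\[
\Bigl(k+1+\sum_{I}e_i-|I|\Bigr)+\sum_{I'\setminus I}e_i-|I'\setminus I|
=k+1+\sum_{I'}e_i-|I'|.
\]
So the same ordinary correlator \(X_{I',\sigma}\) appears as on the left. Its sign is \((-1)^{1+|T|-|I|-|\pi|}\), where \(\pi\) is the resulting partition of \(S\).

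\emph{Inversion step.} Interchange the sums over \(I\) and \((I',\sigma)\). Since \(\mathcal K\) is downward closed, \(I\cup\{b\}\in\mathcal K\) holds for every nonempty \(I\subseteq I'\), so \(I\) ranges over all nonempty subsets of \(I'\). The coefficient of \(X_{I',\sigma}\) in the subtracted sum is then
\[
-\sum_{\emptyset\ne I\subseteq I'}(-1)^{1+|T|-|I|-|\pi|}
=(-1)^{|T|-|\pi|}\sum_{\emptyset\ne I\subseteq I'}(-1)^{|I|}
=(-1)^{|T|+1-|\pi|}=(-1)^{|S|-|\pi|},
\]
using \(\sum_{\emptyset\ne I\subseteq I'}(-1)^{|I|}=-1\) for \(I'\ne\emptyset\). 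This agrees with the left-hand coefficient, which completes the matching.

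The only step needing care is this inversion step: the bookkeeping that the admissible partitions on both sides correspond bijectively, via downward closure, and that the signs combine correctly. Conventions such as negative exponents or unstable ordinary correlators need no separate treatment, since the identity holds termwise at the level of formal ordinary correlators.
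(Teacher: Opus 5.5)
Your proposal is correct and follows essentially the same route as the paper. Both expand every term by the reconstruction formula, match the partitions in which \(b\) is a singleton with the heavy \(\mathcal K_T\)-correlator, and, for a block \(\{b\}\cup I'\) with \(I'\neq\emptyset\), collect the correction terms indexed by \(\emptyset\ne I\subseteq I'\) and use \(\sum_{\emptyset\ne I\subseteq I'}(-1)^{|I|}=-1\). Your explicit check of the merged exponent and of the reindexing via downward closure only spells out details the paper leaves tacit. The same holds for your use of the partition-sum formula even when the face exponent \(k+1+\sum_{i\in I}e_i-|I|\) is negative, which the paper's proof also relies on.
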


\begin{proof}
Expand every term by reconstruction and fix a final admissible partition of \(\{b\}\cup T\).  If the block containing \(b\) is the singleton \(\{b\}\), the light and heavy terms agree.  Otherwise let \(U\subseteq T\) be the nonempty set of other labels in that block.  The correction term indexed by \(I\subseteq U\), \(I\ne\emptyset\), has reconstruction codimension smaller by \(|I|\).  Its total coefficient is
\[
 -\sum_{\emptyset\ne I\subseteq U}(-1)^{\codim(\pi)-|I|}
 =(-1)^{\codim(\pi)}.
\]
This is exactly the coefficient on the light side.
\end{proof}

\begin{theorem}[Corrected light Virasoro recursion]\label{thm:main-corrected}
With the notation above, for every \(k\ge-1\),
\[
\begin{aligned}
\left\langle\tau_{k+1;b}\prod_{i\in T}\tau_{e_i;i}\right\rangle_g^{\mathcal{K}}
=&-\sum_{\substack{\emptyset\ne I\subseteq T\\I\cup\{b\}\in \mathcal{K}}}
\left\langle
 \tau_{k+1+\sum_{i\in I}e_i-|I|;I\cup\{b\}}
 \prod_{j\in T\setminus I}\tau_{e_j;j}
\right\rangle_g^{\mathcal{K}}
\\
&+\frac{1}{(2k+3)!!}
\Bigg[
\sum_{\substack{\emptyset\ne J\subseteq T\\J\in \mathcal{K}_T}}
 h_{k;e_J}
\left\langle
 \tau_{\sum_{j\in J}e_j-|J|+1+k;J}
 \prod_{j\in T\setminus J}\tau_{e_j;j}
\right\rangle_g^{\mathcal{K}_T}
\\
&\hspace{27mm}+\frac12
\sum_{\substack{r+s=k-1\\r,s\ge0}}
(2r+1)!!(2s+1)!!
\Bigg(
\left\langle\tau_r\tau_s\prod_{i\in T}\tau_{e_i;i}\right\rangle_{g-1}^{\mathcal{K}_T}
\\
&\hspace{27mm}+
\sum_{I\subseteq T}\sum_{g_1+g_2=g}
\left\langle\tau_r\prod_{i\in I}\tau_{e_i;i}\right\rangle_{g_1}^{\mathcal{K}|_I}
\left\langle\tau_s\prod_{i\in T\setminus I}\tau_{e_i;i}\right\rangle_{g_2}^{\mathcal{K}|_{T\setminus I}}
\Bigg)
\Bigg]
\\
&\hspace{27mm} +\delta_{k,-1}\delta_{g,0}\,\mathsf S_{\mathcal{K}_T}((e_i)_{i\in T})
+\frac1{24}\delta_{k,0}\delta_{g,1}\delta_{T,\emptyset}.
\end{aligned}
\]
\end{theorem}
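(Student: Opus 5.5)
The corrected light recursion follows by combining the heavy--light comparison of Lemma \ref{lem:heavy-light-corrected} with the heavy recursion of Theorem \ref{thm:heavy-corrected} applied to the restricted complex \(\mathcal K_T\) on \(T\). First apply Lemma \ref{lem:heavy-light-corrected}. It writes the left-hand side as the heavy correlator \(\langle\tau_{k+1}\prod_{i\in T}\tau_{e_i;i}\rangle_g^{\mathcal K_T}\) minus exactly the correction sum over \(\emptyset\ne I\subseteq T\) with \(I\cup\{b\}\in\mathcal K\). That correction sum is the first line of the claimed formula verbatim, so nothing further needs to be done with it.

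Next, expand the heavy correlator \(\langle\tau_{k+1}\prod_{i\in T}\tau_{e_i;i}\rangle_g^{\mathcal K_T}\) using Theorem \ref{thm:heavy-corrected}, with \(S\) replaced by \(T\) and \(\mathcal K\) by \(\mathcal K_T\). The linear sum then runs over nonempty faces \(J\in\mathcal K_T\), and the nonseparating term is taken in \(\mathcal K_T\). In the separating term the restricted complexes are \((\mathcal K_T)|_I=\mathcal K|_I\) and \((\mathcal K_T)|_{T\setminus I}=\mathcal K|_{T\setminus I}\), since restriction is transitive. The exceptional terms become \(\delta_{k,-1}\delta_{g,0}\mathsf S_{\mathcal K_T}((e_i)_{i\in T})\) and \(\frac1{24}\delta_{k,0}\delta_{g,1}\delta_{T,\emptyset}\). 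Substituting this expansion gives the stated identity term by term.

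The only real check is bookkeeping. All correlators here are reconstructed correlators, so no stability hypothesis is needed. This is why Lemma \ref{lem:heavy-light-corrected} and Theorem \ref{thm:heavy-corrected} hold for every \(k\ge-1\), including \(T=\emptyset\) and unstable quotient types; the same applies to the formal correlators with a merged face \(I\cup\{b\}\) or \(J\). I expect the main (mild) obstacle to be confirming that the two results are applicable exactly as stated, in particular in degenerate cases:
\begin{itemize}
\item when \(T=\emptyset\), the correction sum is empty and \(\mathcal K_T\) is the empty complex;
\item reconstruction is applied consistently with \(\mathcal K_T\) rather than \(\mathcal K\) on the heavy side.
\end{itemize}
Beyond this, no new combinatorics is required.
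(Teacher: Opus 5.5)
Your proposal is correct and is the paper's argument: rewrite the left side with Lemma \ref{lem:heavy-light-corrected}, then expand the heavy correlator \(\langle\tau_{k+1}\prod_{i\in T}\tau_{e_i;i}\rangle_g^{\mathcal K_T}\) with Theorem \ref{thm:heavy-corrected} applied to \(\mathcal K_T\), which makes the exceptional terms indexed by \(\mathcal K_T\) and \(T\). Your extra bookkeeping (transitivity of restriction, the case \(T=\emptyset\)) spells out what the paper's two-line proof leaves implicit; one convention is worth keeping explicit: merged-face correlators with a negative face exponent (e.g.\ at \(k=-1\)) must be evaluated by the partition formula, not set to zero, as Lemma \ref{lem:heavy-light-corrected} already requires.
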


\begin{proof}
Insert the heavy recursion of Theorem \ref{thm:heavy-corrected} into the heavy--light comparison of Lemma \ref{lem:heavy-light-corrected}.  The exceptional terms belong to the heavy correlator on the restricted complex \(\mathcal{K}_T\), so they appear exactly as displayed.
\end{proof}

\begin{example}[Why the string correction is necessary]\label{ex:string-correction}
Let \(S=\{1,2,3\}\), and let \(\mathcal{K}\) have the single nonsingleton face \(\{1,2\}\).  Take \(g=0\), \(k=-1\), and \((e_1,e_2,e_3)=(0,1,0)\).  Reconstruction gives
\[
\begin{aligned}
\left\langle\tau_0\tau_{0;1}\tau_{1;2}\tau_{0;3}\right\rangle_0^{\mathcal{K}}
&=\left\langle\tau_0\tau_0\tau_1\tau_0\right\rangle_0
 -\left\langle\tau_0\tau_0\tau_0\right\rangle_0\\
&=1-1=0.
\end{aligned}
\]
The linear part of the recursion contributes \(1\), from \(J=\{2\}\).  On the other hand, the admissible two-block partition \(\{\{1,2\},\{3\}\}\) contributes
\[
  \mathsf S_{\mathcal{K}}(0,1,0)=(-1)^{3-2}=-1.
\]
The corrected right-hand side is therefore \(1-1=0\), as required.
\end{example}

\begin{corollary}[Initial values and determination]\label{cor:initial}
The reconstructed theory contains the two basic constants
\[
  \left\langle\tau_{0;I_1}\tau_{0;I_2}\tau_{0;I_3}\right\rangle_0^{\mathcal K}=1,
  \qquad
  \left\langle\tau_{1;I}\right\rangle_1^{\mathcal K}=\frac1{24},
\]
whenever the displayed formal correlators are defined.  Every geometric or auxiliary coefficient in the paper is determined directly by the finite partition formula of Theorem \ref{thm:reconstruction} and the ordinary Witten--Kontsevich correlators; no separate, unstated induction on the recursion terms is required.
\end{corollary}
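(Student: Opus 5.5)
The corollary has two parts: the two displayed constants, and the determination claim. I would prove both by direct evaluation from Definition \ref{def:formal-corr}, using Theorem \ref{thm:reconstruction} only to identify geometric values with reconstructed ones in the stable range.

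For the first constant I apply Definition \ref{def:formal-corr} with $m=3$, $n=0$, and all $e_a=0$, and sort the admissible partitions $\pi$ of $\{I_1,I_2,I_3\}$ by shape.

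\begin{itemize}
\item The discrete partition has sign $(-1)^0=1$ and contributes $\langle\tau_0^3\rangle_0=1$.
\item Any partition with a block $U$ of size $|U|\ge2$ gives that block the merged exponent $e_U^\sharp=0-|U|+1<0$. The ordinary correlator therefore vanishes by the negative-exponent convention.
\end{itemize}

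So the value is $1$, whatever the faces $I_1\cup I_2$, $I_1\cup I_3$, $I_2\cup I_3$, $I_1\cup I_2\cup I_3$ are. This is the computation already recorded in the remark following Definition \ref{def:formal-corr}.

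For the second constant, $m=1$ and the only partition is $\{\{I\}\}$. Here $e^\sharp=1$, the sign is $(-1)^0=1$, and the value is $\langle\tau_1\rangle_1=1/24$. Where these types are stable (genus $1$ always; genus $0$ when $\mathcal K/\mathcal P$ is at least triparted), Theorem \ref{thm:reconstruction} shows the same numbers are geometric integrals.

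For the determination claim I note that every correlator appearing in Theorems \ref{thm:heavy-corrected} and \ref{thm:main-corrected} and in Lemma \ref{lem:heavy-light-corrected} carries the superscript $\mathcal K$ (or $\mathcal K|_I$, $\mathcal K_T$). By the convention of Section \ref{sec:recursions}, each is by definition the finite signed sum of Definition \ref{def:formal-corr} over $\Pi_{\mathcal K}$ of finitely many ordinary Witten--Kontsevich correlators. Those ordinary correlators are finite intersection numbers on $\Mbar_{g,n}$, or zero by convention.

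Thus no value is defined through the recursions themselves. The recursions are identities among already-determined numbers, proved by coefficient comparison in Theorem \ref{thm:heavy-corrected}, so there is no circularity to resolve. Geometric coefficients are determined by identifying them with reconstructed ones via Theorem \ref{thm:reconstruction}.

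The only delicate point I expect is the phrase ``whenever defined''. I would make explicit that the faces $I_a$ are pairwise disjoint faces of $\mathcal K$, so that Definition \ref{def:formal-corr} applies. I would also note that no stability hypothesis is needed for the reconstructed values, which is exactly why these formal constants can serve as initial data.
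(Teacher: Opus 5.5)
Your proposal is correct and matches the paper's proof. You evaluate both constants directly from Definition~\ref{def:formal-corr}: in the three-point case every merged block has negative exponent, and the one-point case has only one partition, so only the discrete partition contributes. You then reduce the determination claim to Theorem~\ref{thm:reconstruction} together with the convention that correlators with superscript $\mathcal K$ denote reconstructed ones, and your added remarks on stability and on the absence of circularity are accurate elaborations of that same argument.
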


\begin{proof}
In the genus-$0$, $3$-point case, every nonsingleton block has negative merged exponent, so only the discrete partition contributes.  In the genus-$1$, $1$-point case there is again only the discrete partition.  The final assertion is precisely the content of Theorem \ref{thm:reconstruction}.
\end{proof}

\section{A Fock-space potential, heavy Virasoro operators, and KdV}\label{sec:fock}
The purpose of this section is to present Virasoro constraints for correlators on $\Mbar_{g,\mathcal{K}}$ in the form of generating series.

\subsection{Collision systems}
We want to consider generating series of correlators. A fixed finite complex cannot support a generating series with arbitrarily many light markings. We therefore allow an infinite label set, while requiring every individual coefficient to have finite support.

\begin{definition}[Collision system]
A collision system \(\mathcal{K}_\infty\) is a simplicial complex of finite subsets of a countably infinite label set \(\Lambda\). Equivalently, it is a compatible family of finite restrictions
\[
  \mathcal{K}_S=\mathcal{K}_\infty|_S,
  \qquad S\subset\Lambda\text{ finite}.
\]
\end{definition}

Every coefficient considered below has finite label support and is therefore determined by one finite restriction \(\mathcal{K}_S\). Geometric correlators are used when the corresponding finite moduli stack exists; all other auxiliary coefficients are defined by reconstruction.

\subsection{The disjoint-support algebra}

For every nonempty finite face \(I\in \mathcal{K}_\infty\) and \(d\ge0\), introduce a symbol \(t_{d;I}\).  For a finite subset \(F\subset\Lambda\), let \(\mathscr F_{\mathcal{K}_{\infty}}[F]\) be the descendant-degree completion of the \(\mathbb Q\)-vector space with basis the monomials
\[
  t_{e_1;I_1}\cdots t_{e_m;I_m}
\]
for which \(\emptyset\ne I_a\in \mathcal{K}_\infty\), the supports \(I_a\) are pairwise disjoint for \(a\ne b\), and \(I_1\cup\cdots\cup I_m=F\).  Set
\[
  \mathscr F_{\mathcal{K}_{\infty}}
  :=\prod_{\substack{F\subset\Lambda\\F\text{ finite}}}
    \mathscr F_{\mathcal{K}_{\infty}}[F].
\]
Define multiplication by disjoint union:
\[
 (t_{\be;\bI})(t_{\mathbf f;\mathbf J})=
 \begin{cases}
 t_{\be,\mathbf f;\bI,\mathbf J},&
 \bigl(\bigcup I_a\bigr)\cap\bigl(\bigcup J_b\bigr)=\emptyset,\\
 0,&\text{otherwise.}
 \end{cases}
\]
For each fixed finite output support, only finitely many decompositions of that support enter a product.  Hence this coefficientwise product is well-defined for arbitrary \(\Lambda\), and \(\mathscr F_{\mathcal{K}_{\infty}}\) is a commutative associative algebra with unit.

\begin{remark}\label{rem:no-light-derivatives}
Although \(\mathscr F_{\mathcal{K}_{\infty}}\) can be presented as a quotient of a polynomial ring by overlap relations, ordinary partial derivatives in the light symbols do not descend to that quotient.  We therefore make no use of operators \(\partial/\partial t_{d;I}\).  Creation and annihilation operators can be defined in a genuine species formalism, but that additional structure is not needed here.
\end{remark}

Let \(x_0,x_1,\ldots\) be ordinary, repeatable heavy variables.  Define the {\em extended potential}
\[
\begin{aligned}
\widehat F^{\mathcal{K}_{\infty}}(\bx;\bt)
:=\sum_{g\ge0}\sum_{m,n\ge0}\frac{1}{m!n!}
\sum_{\substack{\varnothing\ne I_1,\ldots,I_m\in \mathcal{K}_\infty\\
                  I_a\cap I_b=\emptyset\quad(a\ne b)}}
\sum_{\be,\mathbf{k}}
&\left\langle
 \prod_{\alpha=1}^n\tau_{k_\alpha}
 \prod_{a=1}^m\tau_{e_a;I_a}
\right\rangle_g^{\mathcal{K}_{\infty}}
\cdot
\prod_{a=1}^m t_{e_a;I_a}
\prod_{\alpha=1}^n x_{k_\alpha}.
\end{aligned}
\]
This lies in \(\mathscr F_{\mathcal{K}_{\infty}}[\![x_0,x_1,\ldots]\!]\).

\subsection{Cumulants and the explicit potential}

For \(k\ge0\), set
\[
\begin{aligned}
 \Theta_k
 :=\sum_{m\ge1}\frac{(-1)^{m-1}}{m!}
 \sum_{\substack{\varnothing\ne I_1,\ldots,I_m\in \mathcal{K}_\infty\\
                  I_a\cap I_b=\emptyset\quad(a\ne b),\\
                  I_1\cup\cdots\cup I_m\in \mathcal{K}_\infty}}
 \sum_{\substack{e_1,\ldots,e_m\ge0\\
                  e_1+\cdots+e_m-m+1=k}}
 \prod_{a=1}^m t_{e_a;I_a}.
\end{aligned}
\]
Each coefficient has finite support, so this is a well-defined element of \(\mathscr F_{\mathcal{K}_{\infty}}\).

Let \(F^{\WK}(y_0,y_1,\ldots)\) be the ordinary Witten--Kontsevich potential.

\begin{theorem}[Fock-space reconstruction of the potential]\label{thm:fock-potential}
In \(\mathscr F_{\mathcal{K}_{\infty}}[\![\bx]\!]\),
\[
 {
 \widehat F^{\mathcal{K}_{\infty}}(\bx;\bt)
 =F^{\WK}(x_0+\Theta_0,x_1+\Theta_1,x_2+\Theta_2,\ldots).}
\]
\end{theorem}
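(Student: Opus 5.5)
The plan is to compare coefficients of a fixed monomial
\[
\prod_{a=1}^m t_{e_a;I_a}\prod_{\alpha=1}^n x_{k_\alpha}
\]
on both sides. Here the supports \(I_a\) are pairwise disjoint faces. Since every coefficient has finite support, the identity reduces to a statement about one finite restriction \(\mathcal K_S\) with \(S\supseteq I_1\cup\cdots\cup I_m\). By Definition \ref{def:formal-corr}, the left-hand coefficient is
\[
\sum_{\pi\in\Pi_{\mathcal K}(I_1,\ldots,I_m)}(-1)^{m-|\pi|}
\Bigl\langle\prod_\alpha\tau_{k_\alpha}\prod_{U\in\pi}\tau_{e_U^\sharp}\Bigr\rangle_g .
\]
We must account for the symmetry factor: a monomial with distinct \((e_a,I_a)\) arises \(m!\) times in the ordered sum, and disjointness forces the light factors to be distinct. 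So the light coefficient of \(\prod_a t_{e_a;I_a}\) is exactly the reconstructed correlator, while the \(x\)-part keeps its usual \(1/n!\) normalization.

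On the right, write
\[
F^{\WK}(\by)=\sum_g\sum_N\frac1{N!}\sum_{k_1,\ldots,k_N}\langle\tau_{k_1}\cdots\tau_{k_N}\rangle_g\,y_{k_1}\cdots y_{k_N},
\]
and substitute \(y_k=x_k+\Theta_k\). Expanding multinomially, a term with \(n\) factors of \(x\) and \(p\) factors of \(\Theta\) appears with coefficient \(\binom{N}{n}/N!=1/(n!\,p!)\). The product \(\Theta_{c_1}\cdots\Theta_{c_p}\) is computed in the disjoint-support algebra, so overlapping supports vanish. Each surviving monomial \(\prod_a t_{e_a;I_a}\) therefore comes from an ordered set partition of the factors \(\{(e_a,I_a)\}\) into \(p\) nonempty blocks \(U_1,\ldots,U_p\). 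The \(j\)-th block is contributed by \(\Theta_{c_j}\), with \(I_{U_j}\in\mathcal K\) and \(c_j=e_{U_j}^\sharp\).

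Inside \(\Theta_c\), the factor \(1/|U|!\) cancels the \(|U|!\) orderings of the block, leaving the sign \((-1)^{|U|-1}\). Likewise the \(1/p!\) cancels the orderings of the blocks. Hence the coefficient is
\[
\sum_{\pi\in\Pi_{\mathcal K}}\prod_{U\in\pi}(-1)^{|U|-1}\,\langle\cdots\rangle_g,
\]
and \(\prod_{U\in\pi}(-1)^{|U|-1}=(-1)^{m-|\pi|}\), which matches the left side.

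The constraint \(c\ge 0\) in \(\Theta_c\) causes no discrepancy. Blocks with \(e_U^\sharp<0\) are simply absent on the right, and they give zero on the left by the convention on negative exponents. Unstable ordinary correlators are zero on both sides by convention.

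The main obstacle is the bookkeeping of symmetry factors in the completed algebra. One must check that \(1/m!\) in \(\widehat F\) and the factors \(1/p!\) and \(1/|U|!\) from \(F^{\WK}\) and \(\Theta\) match exactly, uniformly in the repeated heavy indices \(k_\alpha\). The check I would do first is to work with labeled light factors: disjointness makes them pairwise distinct, so no extra automorphisms arise. The remaining steps are the observation that coefficientwise products in \(\mathscr F_{\mathcal K_\infty}\) involve only finitely many decompositions, so the substitution is well-defined, and a routine comparison of the \(x\)-multinomial factors.
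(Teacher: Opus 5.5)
Your proposal is correct and is essentially the paper's proof: you substitute $y_k=x_k+\Theta_k$ into $F^{\WK}$, read each disjoint-support product of $\Theta$'s as an admissible partition of the light insertions with block sign $(-1)^{|U|-1}$, and match the result with the partition formula. The only difference is that you make explicit the cancellation of the $1/m!$, $1/(n!\,p!)$ and $1/|U|!$ factors, which the paper leaves implicit. One small point: where the left-hand coefficient is a geometric correlator, its identification with the partition sum comes from Theorem~\ref{thm:reconstruction}, not from Definition~\ref{def:formal-corr} alone.
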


\begin{proof}
Fix a monomial with pairwise disjoint light supports and a heavy monomial.  In the right-hand side, expanding the Witten--Kontsevich potential and then the \(\Theta_k\)'s amounts to choosing a partition of the light insertions.  A block \(L\) is permitted exactly when the union of its supports is a face of \(\mathcal{K}_\infty\); it contributes exponent
\[
  \sum_{a\in L}e_a-|L|+1
\]
and sign \((-1)^{|L|-1}\).  The product of block signs is \((-1)^{m-|p|}\).  Heavy variables are selected directly from the \(x_k\)'s and are never merged.  The resulting coefficient is exactly the reconstruction formula of Theorem \ref{thm:reconstruction}.

The disjoint-support multiplication is essential: for example, \(t_{0;I}^3=0\), so the cubic term of \(F^{\WK}\) creates no forbidden repeated use of the same nonempty label support.
\end{proof}

\subsection{Heavy Virasoro constraints}

Write \(y_i=x_i+\Theta_i\).  Since \(\Theta_i\) is independent of the heavy variables, \([\partial_{x_i},y_j]=\delta_{ij}\).  Define operators on \(\mathscr F_{\mathcal{K}_{\infty}}[\![\bx]\!]\) by
\[
  2\mathbb L_{-1}
  :=-\frac{\partial}{\partial x_0}
    +\sum_{i\ge1}y_i\frac{\partial}{\partial x_{i-1}}
    +\frac{y_0^2}{2},
\]
\[
  2\mathbb L_0
  :=-3\frac{\partial}{\partial x_1}
    +\sum_{i\ge0}(2i+1)y_i\frac{\partial}{\partial x_i}
    +\frac18,
\]
and, for \(k\ge1\),
\[
\begin{aligned}
 2\mathbb L_k
 &: =-(2k+3)!!\frac{\partial}{\partial x_{k+1}}
 +\sum_{i\ge0}h_{k;i}y_i\frac{\partial}{\partial x_{i+k}}
 +\frac12
 \sum_{\substack{r+s=k-1\\r,s\ge0}}
 (2r+1)!!(2s+1)!!
 \frac{\partial^2}{\partial x_r\partial x_s}.
\end{aligned}
\]

\begin{theorem}[Heavy Virasoro operators]\label{thm:heavy-operators}
For all \(k\ge-1\),
\[
  \mathbb L_k\bigl(e^{\widehat F^{\mathcal{K}_{\infty}}}\bigr)=0,
\]
and
\[
  [\mathbb L_m,\mathbb L_n]=(m-n)\mathbb L_{m+n}.
\]
\end{theorem}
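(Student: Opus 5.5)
The plan is to deduce both claims from Theorem~\ref{thm:fock-potential} by transporting the ordinary Witten--Kontsevich Virasoro constraints through the substitution $y_i=x_i+\Theta_i$.

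Let $R:=\mathscr F_{\mathcal{K}_{\infty}}$. It is a commutative associative unital $\mathbb Q$-algebra, and the $\Theta_i$ are elements of $R$ with no heavy dependence. Consider the ring homomorphism
\[
\sigma:\mathbb Q[\![y_0,y_1,\ldots]\!]\longrightarrow R[\![\bx]\!],\qquad y_i\mapsto x_i+\Theta_i .
\]
We first check that $\sigma$ is well defined. For a fixed heavy monomial and a fixed finite light support $F$, a monomial in the $y$'s contributes only through products of $\Theta$'s whose supports partition $F$. Descendant-degree (dimension) constraints on the Witten--Kontsevich coefficients then leave only finitely many contributions in each graded piece. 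This is the same finiteness already used in Theorem~\ref{thm:fock-potential}, which gives $\sigma(F^{\WK})=\widehat F^{\mathcal{K}_\infty}$. The argument extends to $\sigma(e^{F^{\WK}})=e^{\widehat F^{\mathcal{K}_\infty}}$, with the exponential taken in the completed algebra. Here the constant term in genus one has degree zero, and this should be treated separately, or one simply notes that $e^{F}$ makes sense as a formal series in the descendant grading.

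The key step is the intertwining identity
\[
\frac{\partial}{\partial x_i}\circ\sigma=\sigma\circ\frac{\partial}{\partial y_i}.
\]
It holds because $\sigma(f)=f(x+\Theta)$ and $\Theta$ is constant in $\bx$. This is just the chain rule, since $[\partial_{x_i},y_j]=\delta_{ij}$ and $\partial_{x_i}\Theta_j=0$; here $\partial_{x_i}$ is an $R$-linear derivation of $R[\![\bx]\!]$, so no light derivatives are needed, in accordance with Remark~\ref{rem:no-light-derivatives}. Multiplication by $y_i$ is likewise intertwined with multiplication by $x_i+\Theta_i$. Hence $\sigma\circ L_k^{\WK}=\mathbb L_k\circ\sigma$ for every $k\ge-1$, where $L_k^{\WK}$ is the ordinary Virasoro operator in the variables $y$, matching the normalizations in the proof of Proposition~\ref{prop:ordinary-corrected}. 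Applying $\sigma$ to $L_k^{\WK}e^{F^{\WK}}=0$ gives $\mathbb L_k e^{\widehat F}=0$. This uses the Witten--Kontsevich theorem in Virasoro form together with the dilaton shift conventions implicit in the operators as written; the $-(2k+3)!!\partial_{x_{k+1}}$ term encodes the shift $t_1\mapsto t_1-1$.

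For the commutation relations, note that the $\mathbb L_k$ are obtained from the $L_k^{\WK}$ by replacing $y_i$ with $x_i+\Theta_i$ and $\partial_{y_i}$ with $\partial_{x_i}$. The assignment $y_i\mapsto x_i+\Theta_i$, $\partial_{y_i}\mapsto\partial_{x_i}$ preserves the Heisenberg relations $[\partial,y]=\delta$ and $[y,y]=[\partial,\partial]=0$, because $R$ is commutative. The commutator of two such differential operators is computed purely from these relations, so $[\mathbb L_m,\mathbb L_n]$ is the image of $[L_m^{\WK},L_n^{\WK}]=(m-n)L^{\WK}_{m+n}$. No central term appears for $m,n\ge-1$.

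The main obstacle is verifying that these operators act well on the completed algebra. Infinite sums such as $\sum_i y_i\partial_{x_{i+k}}$ must converge, and $y_0^2/2$ must make sense with $\Theta_0^2$ computed in the disjoint-support product. The plan is to check that each operator preserves a filtration by light support and descendant degree, so that coefficientwise everything is a finite sum. This check would be carried out before invoking the intertwining argument.
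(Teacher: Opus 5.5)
Your proposal is correct and is essentially the paper's argument: by Theorem~\ref{thm:fock-potential}, $\widehat F^{\mathcal K_\infty}=F^{\WK}(\bx+\Theta)$, and $\mathbb L_k$ is the ordinary Witten--Kontsevich operator rewritten under $y_i=x_i+\Theta_i$, $\partial_{y_i}=\partial_{x_i}$, so both the constraints and the commutation relations carry over. Your substitution map $\sigma$, the intertwining identity, and the Heisenberg-relation argument for the commutators (which holds on all of $\mathscr F_{\mathcal K_\infty}[\![\bx]\!]$, not only on the image of $\sigma$) spell out what the paper leaves implicit. Two small corrections: finiteness of $\sigma$ on arbitrary series holds because each $\Theta$-factor uses at least one light label, so dimension constraints are not needed; and $F^{\WK}$ has no constant term, so $e^{F^{\WK}}$ needs no special treatment.
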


\begin{proof}
By Theorem \ref{thm:fock-potential}, \(\widehat F^{\mathcal{K}_{\infty}}=F^{\WK}(\by)\).  The operators \(\mathbb L_k\) are exactly the ordinary Witten--Kontsevich Virasoro operators in the translated variables \(y_i=x_i+\Theta_i\), with \(\partial_{y_i}=\partial_{x_i}\).  Both assertions therefore follow from the ordinary Virasoro constraints and commutation relations.
\end{proof}

\begin{remark}
The string operator contains
\[
  \frac12(x_0+\Theta_0)^2
  =\frac{x_0^2}{2}+x_0\Theta_0+\frac{\Theta_0^2}{2}.
\]
The mixed and purely heavy terms are indispensable.  This is precisely what is lost if one writes only a quadratic expression in the light variables.
\end{remark}

\subsection{The heavy two-point series and the KdV hierarchy}

Define
\[
 U^{\mathcal{K}_{\infty}}(\bx;\bt)
 :=\frac{\partial^2\widehat F^{\mathcal{K}_{\infty}}}{\partial x_0^2}.
\]
The derivatives are legitimate because the heavy variable \(x_0\) is repeatable.

Let the Gelfand--Dickey polynomials be normalized by
\[
 R_0[U]=U,
 \qquad
 R_n[0]=0, \quad n\ge0,
\]
and, for \(n\ge0\), by
\[
 \frac{\partial R_{n+1}}{\partial y_0}
 =\frac1{2n+3}
 \left(
  U_{y_0}+2U\frac{\partial}{\partial y_0}
  +\frac14\frac{\partial^3}{\partial y_0^3}
 \right)R_n[U].
\]
The condition \(R_n[0]=0\) fixes the integration constant at every step.  With this normalization,
\[
 R_1[U]=\frac12U^2+\frac1{12}U_{y_0y_0},
\]
so the first nontrivial flow is
\[
 U_{y_1}=UU_{y_0}+\frac1{12}U_{y_0y_0y_0}.
\]

\begin{theorem}[Explicit heavy KdV solution]\label{thm:kdv}
Let
\[
 U^{\WK}(\by):=\frac{\partial^2F^{\WK}}{\partial y_0^2}.
\]
Then
\[
 {
 U^{\mathcal{K}_{\infty}}(\bx;\bt)
 =U^{\WK}(x_0+\Theta_0,x_1+\Theta_1,x_2+\Theta_2,\ldots).}
\]
Consequently, for every \(n\ge0\),
\[
 \frac{\partial U^{\mathcal{K}_{\infty}}}{\partial x_n}
 =\frac{\partial}{\partial x_0}R_n[U^{\mathcal{K}_{\infty}}].
\]
On the translated initial slice \(y_i=x_i+\Theta_i=0\) for \(i>0\), one has \(U^{\mathcal{K}_{\infty}}=y_0=x_0+\Theta_0\).
\end{theorem}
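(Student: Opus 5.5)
The plan is to deduce everything from Theorem \ref{thm:fock-potential} by the chain rule, using that the translation \(y_i=x_i+\Theta_i\) does not depend on the heavy variables. Since each \(\Theta_i\) lies in \(\mathscr F_{\mathcal K_\infty}\) and is annihilated by every \(\partial/\partial x_j\), the substitution homomorphism
\[
\sigma:\mathbb Q[\![y_0,y_1,\ldots]\!]\longrightarrow \mathscr F_{\mathcal K_\infty}[\![\bx]\!],\qquad y_i\mapsto x_i+\Theta_i,
\]
satisfies \(\partial_{x_j}\circ\sigma=\sigma\circ\partial_{y_j}\). To see this, check it on monomials. Then pass to the completion, which is legitimate because coefficients of fixed finite light support and fixed heavy degree involve only finitely many terms, exactly as in the well-definedness argument for \(\Theta_k\) and \(\widehat F^{\mathcal K_\infty}\). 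I will also note that \(\Theta_0\) has no constant term, since every summand has a nonempty support. So the substitution makes sense in the descendant-degree completion, once it is combined with the usual dilaton/genus grading in which \(F^{\WK}\) is a formal series. I expect the main technical point to be this convergence check, together with the fact that the disjoint-support algebra is commutative and associative. Commutativity and associativity are what make \(\sigma\) a ring homomorphism.

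With \(\sigma\) in hand, the first claim follows by applying \(\partial_{x_0}^2\) to Theorem \ref{thm:fock-potential}: \(U^{\mathcal K_\infty}=\sigma(\partial_{y_0}^2F^{\WK})=\sigma(U^{\WK})\). For the KdV flows, I will use the Witten--Kontsevich theorem in the normalization fixed above. It says that \(\partial_{y_n}U^{\WK}=\partial_{y_0}R_n[U^{\WK}]\) for all \(n\ge0\), and the case \(n=0\) is tautological since \(R_0[U]=U\). Each \(R_n\) is a polynomial differential expression in \(U\) and its \(y_0\)-derivatives. Because \(\sigma\) is a ring homomorphism intertwining \(\partial_{y_0}\) with \(\partial_{x_0}\), we get \(\sigma(R_n[U^{\WK}])=R_n[U^{\mathcal K_\infty}]\), where the \(R_n\) on the right are computed with \(\partial_{x_0}\). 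Applying \(\sigma\) to the KdV equation and intertwining \(\partial_{y_n}\) with \(\partial_{x_n}\) then gives the displayed flow.

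For the initial slice, I will use the classical fact that \(U^{\WK}|_{y_i=0,\,i>0}=y_0\). This holds because the only nonzero correlators \(\langle\tau_0^{n}\rangle_g\) are those with \(g=0\) and \(n=3\), by dimension count, so \(F^{\WK}|_{y_{>0}=0}=y_0^3/6\). Under \(\sigma\) this reads \(U^{\mathcal K_\infty}=x_0+\Theta_0\) on the locus where \(x_i+\Theta_i=0\) for \(i>0\). The statement should be read in the sense that \(U^{\mathcal K_\infty}\), written as a series in the translated variables \(y_i\), is \(\sigma\) of a series in the \(y_i\). The restriction to the slice is then performed on that \(y\)-series before applying \(\sigma\).
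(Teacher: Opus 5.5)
Your proposal is correct and follows the same route as the paper. You differentiate the identity of Theorem \ref{thm:fock-potential} twice in the repeatable heavy variable \(x_0\), use that \(\Theta_i\) is independent of \(\bx\) so heavy derivatives coincide with derivatives in the translated times \(y_i=x_i+\Theta_i\), and then import the ordinary Witten--Kontsevich KdV hierarchy and the initial value \(U^{\WK}|_{y_{>0}=0}=y_0\); your substitution homomorphism \(\sigma\), together with its finiteness and intertwining checks, only spells out what the paper's three-line proof leaves implicit.
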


\begin{proof}
Differentiate the identity in Theorem \ref{thm:fock-potential} twice with respect to the repeatable heavy variable \(x_0\).  Since \(\Theta_i\) is independent of every \(x_j\), heavy differentiation is the same as differentiation in the translated times \(y_i\).  The ordinary Witten--Kontsevich KdV hierarchy then gives the result.
\end{proof}

\subsection{The Hassett specialization without infinite coefficients}

Let \(A\subset[0^+,1]\) be additively closed and take \(\Lambda=A\times\mathbb N\) with
\[
 I\in \mathcal{K}_\infty^A
 \quad\Longleftrightarrow\quad
 \sum_{(a,r)\in I}a\le1.
\]
One must not identify the infinitely many standardized label variables of the same weight.  Instead, set all nonsingleton cluster variables to $0$ and apply the coefficientwise symmetrization
\[
\begin{aligned}
 \operatorname{Sym}_A(\Phi)
 :=\sum_{n\ge0}\frac1{n!}
 \sum_{\substack{a_1,\ldots,a_n\in A\\e_1,\ldots,e_n\ge0}}
 &\text{Coeff}_{\left[
 \prod_{j=1}^n t_{e_j;\{(a_j,j)\}}
 \right]}(\Phi)
 \cdot\prod_{j=1}^n z_{e_j;a_j}.
\end{aligned}
\]
Here \(\text{Coeff}_{[M]}(\Phi)\) denotes the coefficient of \(M\), and the displayed sum is understood coefficientwise in the product over finite multisets of pairs \((e,a)\). For any fixed output monomial, there are only finitely many orderings of its finite multiset of weights.  Every input coefficient uses only the standardized finite label set \(\{(a_j,j):1\le j\le n\}\), so no infinite sum occurs.

Let \(\widehat F^{\mathcal{K}_{\infty}^A}_{\mathrm{sing}}\) denote the result of setting every nonsingleton cluster variable to $0$. Then the precise specialization is
\[
 \operatorname{Sym}_A\!\left(
  \widehat F^{\mathcal{K}_{\infty}^A}_{\mathrm{sing}}
 \right)\Big|_{\bx=0}
 =F^A(\bz).
\]
If \(1\in A\), one may retain the heavy variables; in that case
\[
 \operatorname{Sym}_A\!\left(
  \widehat F^{\mathcal{K}_{\infty}^A}_{\mathrm{sing}}
 \right)(\bx;\bz)
 =F^A(\widetilde\bz),
 \qquad
 \widetilde z_{d;a}=
 \begin{cases}
   z_{d;a},&a\ne1,\\
   z_{d;1}+x_d,&a=1.
 \end{cases}
\]
Indeed, a heavy insertion and a weight-one light singleton are both noncolliding weight-one markings, and the exponential generating convention combines them by the binomial formula.  If \(1\notin A\), retaining \(\bx\) instead gives the analogous potential for \(A\cup\{1\}\), with weight-$1$ variable \(x_d\).  This symmetrization is a species-level assembly operation, not a homomorphism obtained by naively identifying infinitely many variables.

\appendix

\section{Natural non-Hassett constructions}\label{sec:examples}

This appendix discusses natural moduli stacks of simplicially stable curves that are not Hassett spaces.

\subsection{The established logarithmic tail-function construction}

A general source of non-threshold spaces is the logarithmic contraction developed by Blankers and Bozlee.

\begin{theorem}[Blankers--Bozlee tail-function contraction]\label{thm:BB-tail}
Let \(g\ge0\) and \(n\ge1\), and let \(\mathcal K\) be a simplicial complex on \([n]\), at least triparted when \(g=0\).  On the universal stable curve with its basic logarithmic structure, there is a universal tail function \(\mu_{\mathcal K}\) whose support on every geometric fiber is the union of the rational tails whose marking sets belong to \(\mathcal K\).  Twisting the logarithmic dualizing bundle by \(\mu_{\mathcal K}\) and taking a relative Proj produces a base-change-compatible contraction
\[
 \rho_{\mathcal K}:\Mbar_{g,n}\longrightarrow\Mbar_{g,\mathcal K}
\]
that contracts exactly those tails.
\end{theorem}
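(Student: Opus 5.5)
The statement is essentially the content of \cite[Sections~5--6]{BB}, so I will assemble it from the extremal-assignment and logarithmic-contraction machinery there, checking each hypothesis for an arbitrary complex \(\mathcal K\) that is at least triparted in genus \(0\).

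\emph{Step 1: the tail function.} Let \(\pi:\mathcal C\to\Mbar_{g,n}\) be the universal stable curve with its basic logarithmic structure. Over a geometric point, the rational tails whose marking sets lie in \(\mathcal K\) form a set of subcurves closed under containment, since \(\mathcal K\) is downward closed. I will define \(\mu_{\mathcal K}\) as a piecewise-linear function on the tropicalization of \(\mathcal C\). Its value on a vertex \(v\) is the sum of the lengths of the edges on the path from \(v\) to the complement of the union of \(\mathcal K\)-tails, counting only edges that separate off a \(\mathcal K\)-tail.

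Two facts need checking. First, \(\mu_{\mathcal K}\) has integral slopes and is compatible with edge contractions, so it descends to a global section of \(\overline M_{\mathcal C}^{\rm gp}\). Second, its support is exactly the union of the \(\mathcal K\)-tails. This generization compatibility is the combinatorial heart of the argument. Under smoothing of a node, the marking set of a tail can only grow or the tail can merge with its complement, and downward closure guarantees that a tail's status can only change in the allowed direction.

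\emph{Step 2: the twist and Proj.} Form the line bundle \(\omega_{\pi}^{\log}(\mu_{\mathcal K})\), writing \(\mathcal O(-\mu)\) for the associated torsor. I will show it is \(\pi\)-nef, and that its degree on a component \(Z\) is \(0\) exactly when \(Z\) lies in a \(\mathcal K\)-tail, positive otherwise. The degree of \(\omega^{\log}\) on a tail component equals (number of special points) \(-2\). The twist subtracts outgoing slopes, and these balance to give degree \(0\) along tails and keep positivity elsewhere. Here the triparted hypothesis in genus \(0\) ensures that the complement of the \(\mathcal K\)-tails is nonempty, i.e.\ not everything gets contracted.

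Next I will prove semiampleness of a suitable power and cohomology-and-base-change for \(\pi_*\) of its powers. For this I will follow the Smyth/BB vanishing argument: \(H^1\) vanishes because the bundle has nonnegative degree on trees of rational curves and the relevant genus components are not contracted. Taking \(\operatorname{Proj}\bigoplus_m\pi_*(\cdot)^{\otimes m}\) then gives a flat family whose formation commutes with base change.

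\emph{Step 3: identify the fibers.} Each \(\mathcal K\)-tail maps to a smooth point carrying the marking set of the tail, which lies in \(\mathcal K\). The result is therefore \(\mathcal K\)-stable, and we obtain a classifying map \(\rho_{\mathcal K}\) to \(\Mbar_{g,\mathcal K}\) contracting exactly those tails. I expect the main obstacle to be Step 1's descent of \(\mu_{\mathcal K}\) to a genuine logarithmic section, together with base-change compatibility of the Proj in Step 2. For both I would invoke \cite[Proposition~4.16 and Sections~5--6]{BB} directly rather than reprove them.
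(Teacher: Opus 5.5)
\textbf{Gap: the tail function $\mu_{\mathcal K}$ is mis-normalized.} Your overall route is the paper's. Its proof only summarizes \cite[Sections~5--6]{BB}: a tail function on the tropicalization, a twist of $\omega_{C/S}(\Sigma)$, and a relative Proj controlled by cohomology and base change. The problem is the one ingredient you make explicit. Your $\mu_{\mathcal K}$ is the distance to the core measured along tail-separating edges, so it has slope $1$ on every such edge. With that normalization the twisted bundle is not trivial on the $\mathcal K$-tails, and the degree claim of Step~2 fails.

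Let $v$ be a component of a $\mathcal K$-tail with markings $J_v$, edge $e_0$ toward the core, and child edges $e_1,\dots,e_k$. Then $\deg\omega^{\log}|_v=|J_v|+k-1$. A slope-$1$ twist changes this by $\pm(k-1)$, giving $|J_v|+2k-2$ or $|J_v|$ depending on the sign convention. For an irreducible tail with marking set $I$ this is $|I|-2$ or $|I|$. So under either sign convention, every irreducible tail with at least three markings would survive the Proj, and the resulting fibers would violate (K4).

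\textbf{The fix.} The slope on the edge $e$ cutting off a $\mathcal K$-tail $Z_e$ must be $|\Mar(Z_e)|-1=\deg\omega^{\log}|_{Z_e}$. That is, $\mu_{\mathcal K}(v)=\sum_e(|\Mar(Z_e)|-1)\,\ell_e$, summed over the tail-separating edges $e$ on the path from $v$ to the core, with the sign dictated by the convention for $\mathcal O(\mu)$. Since $\Mar(Z_{e_0})=J_v\sqcup\bigsqcup_{i\ge1}\Mar(Z_{e_i})$, the twist contributes exactly $1-k-|J_v|$ on $v$, which cancels $\deg\omega^{\log}|_v$. A tree of rational curves of multidegree zero carries the trivial bundle. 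A core component gains $\sum(|\Mar(Z)|-1)\ge0$ on top of its already positive stable degree.

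With this correction your outline agrees with the cited construction. Compatibility with edge contractions is then immediate, because the slope on a surviving edge depends only on the marking set it cuts off. Note also that in genus $0$ the at-least-triparted hypothesis is needed already in Step~1, not only in Step~2. It guarantees that no edge has $\mathcal K$-faces on both sides, so that the core, and hence $\mu_{\mathcal K}$, is well defined.
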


\begin{proof}
This is a summary of \cite[Sections~5 and 6]{BB}. Blankers--Bozlee define tail functions as piecewise-linear functions on tropicalizations, prove their compatibility with face contractions, and identify universal tail functions with extremal assignments supported on rational tails, hence with collision complexes.  For a family \(\pi:C\to S\), the associated line bundle is
\[
  \mathcal L_{\mathcal K}
  =\omega_{C/S}(\Sigma)\otimes\mathcal O_C(\mu_{\mathcal K}).
\]
It is trivial on the selected rational tails and positive on their complement.  Their cohomology-and-base-change argument shows that
\[
  \operatorname{Proj}_S
  \bigoplus_{m\ge0}\pi_*\mathcal L_{\mathcal K}^{\otimes m}
\]
contracts precisely the support of \(\mu_{\mathcal K}\) and commutes with base change.  Applied universally, this gives \(\rho_{\mathcal K}\) and the family of \(\mathcal K\)-stable curves.  No separate claim about a logarithmic structure or universal property on \(\Mbar_{g,\mathcal K}\) is needed here.
\end{proof}

\subsection{A concrete genus-zero non-threshold example}

Let
\[
 O=\{1,3,5\},\qquad E=\{2,4,6\},
\]
and define a complex on the set \([7]\) by
\[
 \mathcal K^\dagger:=2^O\cup2^E\cup\{\emptyset,\{7\}\}.
\]
Thus the odd triple may collide, the even triple may collide, and no odd label may collide with an even label or with the seventh marking.

\begin{proposition}\label{prop:Kdagger}
The complex \(\mathcal K^\dagger\) is at least triparted and is not a threshold complex.  Hence \(\Mbar_{0,\mathcal K^\dagger}\) is a natural modular compactification produced by the logarithmic tail-function contraction of Theorem \ref{thm:BB-tail}, but it is not a Hassett space.
\end{proposition}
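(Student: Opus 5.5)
We verify the three claims about $\mathcal K^\dagger$ (at least triparted, not threshold, and the consequence) in that order. For the first, a $\mathcal K^\dagger$-partition of $[7]$ has blocks in $\mathcal K^\dagger$. Each nonempty face lies in $2^O$, in $2^E$, or equals $\{7\}$. So the block containing $7$ is exactly $\{7\}$, the odd labels need at least one block inside $O$, and the even labels need at least one block inside $E$. Hence every $\mathcal K^\dagger$-partition has at least three blocks. The minimum is attained by $\{O,E,\{7\}\}$. This is exactly the genus-zero stability hypothesis of Convention~\ref{conv:stable-complex}.

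For the second claim, we argue by contradiction. Suppose $\mathcal K^\dagger=\mathcal K_A$ for some weight vector $A=(a_1,\dots,a_7)$ with $a_i\in(0,1]$. Since $O\in\mathcal K^\dagger$, we get $a_1+a_3+a_5\le 1$. Since $E\in\mathcal K^\dagger$, we get $a_2+a_4+a_6\le1$. Since $\{1,2\}\notin\mathcal K^\dagger$, we get $a_1+a_2>1$. The same holds for $\{3,4\}$ and $\{5,6\}$, so $a_3+a_4>1$ and $a_5+a_6>1$. Summing these last three inequalities gives
\[
\sum_{i=1}^6 a_i>3.
\]
The first two inequalities give $\sum_{i=1}^6 a_i\le 2$, a contradiction. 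So no weight vector realizes $\mathcal K^\dagger$, and by \cite[Corollary~4.22]{BB} the complex is not threshold. The main point is choosing the cross pairs $\{1,2\},\{3,4\},\{5,6\}$ so that their sum double-counts the two allowed triples. This does not even need the seventh label; that label only serves to make the complex triparted.

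For the consequence, the first part lets us apply Theorem~\ref{thm:BB-tail} (equivalently \cite[Theorem~4.18]{BB}) with $g=0$ and $n=7$. This produces $\Mbar_{0,\mathcal K^\dagger}$ together with the tail-function contraction $\rho_{\mathcal K^\dagger}$. Its collision complex is exactly $\mathcal K^\dagger$. Now suppose $\Mbar_{0,\mathcal K^\dagger}$ were a Hassett space $\Mbar_{0,A}$, meaning it parametrizes the same family of curves. By \cite[Proposition~4.20]{BB}, its collision complex would be $\mathcal K_A$. The collision complex is an intrinsic invariant of the moduli problem, so we would have $\mathcal K^\dagger=\mathcal K_A$, contradicting the second part. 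No step here is a serious obstacle. The only care needed is to read "is a Hassett space" as an identification of moduli problems, so that it is compatible with the collision-complex invariant.
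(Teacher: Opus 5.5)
Your proposal is correct and follows the paper's proof step for step: the same block-counting argument for tripartedness, a contradiction from the weight inequalities for non-thresholdness, and the Blankers--Bozlee results for the final conclusion. The only differences are cosmetic. You sum the three cross-pair inequalities $a_1+a_2>1$, $a_3+a_4>1$, $a_5+a_6>1$ against the two face inequalities, whereas the paper picks an odd and an even weight each at most $1/3$ and notes that their pair would be a face. You also make explicit the collision-complex invariance that the paper compresses into a citation of Corollary~4.22 of Blankers--Bozlee.
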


\begin{proof}
Every \(\mathcal K^\dagger\)-partition has the singleton block \(\{7\}\), at least one block containing odd labels, and at least one block containing even labels.  It therefore has at least three blocks.

Suppose \(\mathcal K^\dagger\) were defined by weights \(a_1,\ldots,a_7\).  Since \(O\) and \(E\) are faces,
\[
 a_1+a_3+a_5\le1,
 \qquad
 a_2+a_4+a_6\le1.
\]
Some odd weight and some even weight are therefore at most \(1/3\).  Their sum is at most \(2/3\), so the corresponding odd--even pair would be a face of the threshold complex, contrary to the definition of \(\mathcal K^\dagger\).

By Theorem \ref{thm:BB-tail}, the universal logarithmic tail function contracts exactly the rational tails supported entirely on a face of \(\mathcal K^\dagger\).  The resulting Deligne--Mumford moduli stack is \(\Mbar_{0,\mathcal K^\dagger}\).  Since \(\mathcal K^\dagger\) is not a threshold complex, \cite[Corollary~4.22]{BB} implies that \(\Mbar_{0,\mathcal K^\dagger}\) is not a Hassett space.
\end{proof}

\end{document}